\theoremstyle{plain}
\newtheorem{theorem}{Theorem}[section]
\newtheorem{lemma}[theorem]{Lemma}
\newtheorem{proposition}[theorem]{Proposition}
\theoremstyle{definition}
\newtheorem{definition}[theorem]{Definition}
\newtheorem{example}[theorem]{Example}
\theoremstyle{remark}
\newcommand{\set}[1]{\{#1\}}
\newcommand{\RR}{\mathbb{R}}
\newcommand{\ba}{{\bf a}}
\newcommand{\bb}{{\bf b}}
\newcommand{\bB}{{\bf B}}
\newcommand{\bBr}{{\bf B_r}}
\newcommand{\bT}{{\bf T}}
\newcommand{\bTr}{{\bf T_r}}
\newcommand{\bR}{{\bf R}}
\newcommand{\bM}{{\bf M}}
\newcommand{\bMr}{{\bf M_r}}
\newcommand{\bc}{{\bf c}}
\newcommand{\bpi}{{\boldsymbol\pi}}
\newcommand{\mnu}{^{-1}}
\providecommand{\keywords}[1]{\textbf{\textit{Keywords:}} #1}
\begin{document}

	\title{Structural Relations of Symmetry among Players in Strategic Games}
	
	\author{Fernando Tohm\'e and Ignacio Viglizzo}
	
\maketitle
	
	\begin{abstract}
		The notions of symmetry and anonymity in strategic games have been formalized in different ways in the literature. We propose a combinatorial framework to analyze these notions, using group actions. Then, the same framework is used to define partial symmetries in payoff matrices. With this purpose, we introduce  the notion of the role a player plays with respect to another one, and combinatorial relations between roles are studied. Building on them, we define relations directly between players, which provide yet another characterization of structural symmetries in the payoff matrices of strategic games. 
	\end{abstract}
	
	\keywords{Symmetry, anonymity, strategic games, combinatorics, group action }

	\section{Introduction}
	
	Game Theory conceives complete information strategic games as interactive decision problems in which the decision-makers know all the relevant parameters. A payoff matrix is a representation of such a problem.  The literature studies what a solution is and determines conditions for its existence. But payoff matrices can be studied as purely combinatorial objects reflecting the systems of relations involved in the formulation of their underlying interactive decision problems. 
	
	This is precisely the goal of this work, namely to investigate some new properties of the payoff matrices of strategic games. The main focus of this inquiry are the different kinds of symmetries {\it among players} in such matrices. The point of departure of our analysis is the original contribution of John Nash \cite{nash51noncooperative} in which he presented a formal definition of {\em symmetric games} using permutations over the set of all actions.
	
	The literature extended Nash's definition. So, for instance, Dasgupta and Maskin provided their own characterization \cite{dasgupta86existence}. They were motivated by the intuition that a game can be seen as symmetric if payoffs are invariant under permutations of the identities of the players. More recently, other notions of symmetry in games have been studied in \cite{stein11exchangeable} and \cite{ham18symmetry}.

	Interestingly, such notion of symmetry in a game is quite analogous to the concept of {\em anonymity} in Social Choice procedures. Both leave invariant some elements in the representation (payoffs in games, choices in the case of social choice functions) under some permutations of the names of the individual agents. The group-theoretic foundations of anonymity are in many ways related to the presentation in this paper, although they lead to results of a different style (see for instance \cite{kelly92abelian}, \cite{chichilnisky96actions} or \cite{serizawa99strategy}). Further definitions of symmetry and anonymity in games have been introduced in the literature on Algorithmic Game Theory (\cite{daskalakis2007computing}, \cite{brandt08symmetries}). 
	
	In this article, we start by proving precise connections between some of the aforementioned definitions of symmetry and anonymity in payoff matrices. But the central point of our study concerns the characterization of some {\em partial} symmetries in games. To motivate this idea consider a game involving three players in which player $1$ can exchange places with player $2$ without a change in payoffs, while an exchange with player $3$ leads to changes of the payoffs. Thus, there is some sort of symmetry between players $1$ and $2$  that does not hold between players $1$ and $3$. This example points towards a new notion, that has not yet been treated before, namely the {\em role} that a player $i$ may play with respect to another one, $j$.  By this we mean how a change in $i$'s choice of actions modifies $j$'s payoffs independently of how the rest of the players is affected.  We define different relations comparing roles  by means of sets of equations. Depending on which set of equations we choose, we obtain three different binary relations among roles, which we name {\em blind}, {\em twisted} and {\em simulation}.
	
	The blind, twisted and simulation relations defined for roles can be used as a tool to define relations between {\em players}. Thus, we characterize whether a pair of players can be seen as interchangeable according to the payoff matrix of a strategic game, or whether a player can see her situation reflected in the possible actions and outcomes of other players. The notion of role becomes relevant in capturing the different kinds of views a player may entertain. 
	
	This paper is only concerned with symmetry-related structural properties of payoff matrices in strategic games. The notion of roles and the relations among them can be seen as a contribution to the system-theoretical conception of games, according to which a game can be seen as a {\em system of interrelated objects}. Thus, it is of interest to know different types of relation holding among its components.
	
	\subsection{Plan of the paper}
	
	Even if symmetries can be defined among the labels of strategies or among different games, we restrict our attention here on the {\em symmetry among players}. To start the analysis, in Section 2 we introduce and compare different definitions of symmetry (and the related concept of anonymity) present in the literature on Game Theory. We view these symmetries in terms of the action of the group of permutations $S_I$ (among players) on the set of all strategy profiles and show the equivalences between some of these definitions.   
	
	In Section 3 we present an alternative characterization of symmetry in  games in terms of invariant properties under permutation. While this presentation is close to other contributions in the literature, it provides a framework for the analysis of other kinds of symmetry. Before running this study we examine the equivalence between our characterization and some of the concepts introduced in Section 2.
	
	Finally, in Section 4 we present the core of our analysis, by introducing the concept of {\em roles} of player. The idea is to isolate the effects of the actions of a single player on the payoffs of another. We present and compare three different types of roles, {\em blind}, {\em twisted} and {\em simulated}. Then, in Section 5 we change the focus of analysis from the relations between {\em roles} to those between {\em players}. In this new setting we detect two additional concepts of partial symmetry among players. 
	
	Finally, Section 6 discusses briefly the meaning of these results.

	\section{Notions of symmetry and anonymity in the literature}

	In this section we will review different notions of symmetry and the related concept of anonymity presented in the literature on games. We prove equivalences among some of them.  We start by introducing some preliminary  definitions:
	
	\begin{definition}\label{definitionGame}
		Let $G = \langle I, \{A_i\}_{i \in I}, \{\pi_i\}_{i \in I} \rangle$ be a {\em strategic game}, where $I=\set{1,\ldots, n}$ is a set of {\em players} and $A_i, i\in I$ is a finite set of {\em strategies} for each player. A {\em strategy profile}, $\ba=(a_1, \ldots, a_n)$ is an element of $\prod_{i \in I} A_i$.
		In turn, $\pi_i : \prod_{i \in I} A_i \rightarrow \mathbb{R} $ is player $i$'s payoff.
	\end{definition}
	
	If we let $A=\bigcup_{i\in I}A_i$, we can assume that all the players choose an action from the same set $A$ so the strategy profiles are elements of $A^I$. We denote with $\ba=(a_1,\ldots,a_n)$ an element of $A^I$ and with $\ba_{-i}$ the vector $(a_1,\ldots,a_{i-1},a_{i+1},\ldots, a_n)$. Nevertheless, we still use $A_i$ to denote the $i$-th coordinate of $A^I$.
	
	\begin{definition}
		$S_n$ is the group of permutations over a set of $n$ elements. 
		For $I=\set{1,2,\ldots,n}$, we may also denote this group with  $S_I$. A \emph{right group} action of a group $\mathcal{G}$ on an arbitrary set $X$ is an operation $X \times \mathcal{G} \to X$ (denoted as a product) satisfying two axioms: $xe = x$ for all $x \in X$, where $e$ the identity element of $\mathcal{G}$, and $x(gh) = (xg)h$ for all $g, h \in \mathcal{G}$ and all $x \in X$. Similarly, a left action of a group on a set can be defined.
	\end{definition}
	
	\begin{definition}\label{symmetricGame}
		\cite{dasgupta86existence}
		A game $G$  is {\em symmetric} if for any permutation $\sigma \in S_I$  and every profile of strategies $(a_1, \ldots, a_n)$ we have that for all $i\in I$,
		
		\begin{equation}\label{eqDM}
		\pi_i(a_1,\ldots, a_n)= \pi_{\sigma(i)}(a_{\sigma(1)},\ldots, a_{\sigma(n)}).\end{equation}
	\end{definition}
	
	Since permutations are an important tool in this work, we analyze how they act on profiles\footnote{Similar, equivalent notation and results can be found in \cite{stein11exchangeable} and \cite{ham18symmetry}.}:
	
	\begin{lemma}
		Let $\ba=(a_1,\ldots,a_n)\in A^I$. The operation defined by
		\[\ba\sigma=(a_1,\ldots,a_n)\sigma=(a_{\sigma(1)},\ldots,a_{\sigma(n)})\]
		is a right action of the group $S_n$ over the set $A^I$ of all profiles.
	\end{lemma}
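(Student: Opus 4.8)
The plan is to verify directly the two axioms in the definition of a right group action: that $\ba e = \ba$ for the identity permutation $e \in S_n$, and that $\ba(\sigma\tau) = (\ba\sigma)\tau$ for all $\sigma,\tau \in S_n$ and all $\ba \in A^I$.

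The first axiom is immediate: since $e(k)=k$ for every $k\in I$, the definition gives $\ba e = (a_{e(1)},\ldots,a_{e(n)}) = (a_1,\ldots,a_n) = \ba$.

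For the second axiom, the cleanest route is to regard a profile $\ba=(a_1,\ldots,a_n)$ as the function $\ba\colon I\to A$, $k\mapsto a_k$, so that the stated operation is precomposition: the $k$-th coordinate of $\ba\sigma$ is $a_{\sigma(k)}$, i.e.\ $\ba\sigma = \ba\circ\sigma$. Then I would compute coordinatewise: writing $\bb=\ba\sigma$ with $b_k=a_{\sigma(k)}$, the $k$-th coordinate of $(\ba\sigma)\tau = \bb\tau$ is $b_{\tau(k)} = a_{\sigma(\tau(k))}$, whereas the $k$-th coordinate of $\ba(\sigma\tau)$ is $a_{(\sigma\tau)(k)}$. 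These coincide because the product in $S_n$ is composition of permutations, $(\sigma\tau)(k) = \sigma(\tau(k))$; equivalently, the identity $(\ba\sigma)\tau = \ba(\sigma\tau)$ is just associativity of function composition, $(\ba\circ\sigma)\circ\tau = \ba\circ(\sigma\circ\tau)$.

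I expect the only real point of care — more a convention to pin down than a genuine obstacle — to be the bookkeeping of the multiplication order in $S_n$: the formula $\ba\mapsto\ba\sigma$ yields a \emph{right} action precisely when $\sigma\tau$ is read as ``first apply $\tau$, then $\sigma$''; under the opposite convention the same formula would define a left action. Fixing this convention once and for all is what collapses the index chase into a one-line verification.
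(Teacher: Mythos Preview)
Your proof is correct and follows essentially the same route as the paper: both rename $\bb=\ba\sigma$ with $b_k=a_{\sigma(k)}$ and then check coordinatewise that $b_{\tau(k)}=a_{\sigma\tau(k)}$, yielding $(\ba\sigma)\tau=\ba(\sigma\tau)$. You are slightly more thorough in that you also verify the identity axiom explicitly and flag the multiplication convention in $S_n$, but the argument is the same.
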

	\begin{proof}
		Let $\sigma$ and $\tau$ be two permutations. Then for all $\ba\in A^I$
		\[(\ba\sigma)\tau=\ba(\sigma\tau).\]
		To see this we calculate
		\[(\ba\sigma)\tau=(a_{\sigma(1)},\ldots,a_{\sigma(n)})\tau\]
		We rename now $\ba\sigma={\bf b}$, that is, for $i=1,\ldots,n$, $b_i=a_{\sigma(i)}$. But then ${ b}_{\tau(i)}=a_{\sigma\tau(i)}$ so $(\ba \sigma)\tau={\bf b}\tau=(b_{\tau(1)},\ldots,b_{\tau(n)})=\ba(\sigma\tau)$.
	\end{proof}
	
	Using this notation, Definition \ref{symmetricGame} may be restated saying that for all $i\in I, {\bf a}\in A^I$ and $\sigma \in S_n$,\footnote{Nash \cite{nash51noncooperative} also defines symmetric games in terms of permutations of actions, which in turn lead to permutations of the name of players. Depending on how one interprets his notation, this may lead either to Definition \ref{symmetricGame} or to our characterization (definition \ref{permutationInvariant}) below.} 
	
	\begin{equation}\label{eqDMredux}
	\pi_i({\bf a})=\pi_{\sigma(i)}(\ba\sigma).
	\end{equation}
	
	\begin{example}\label{Parikh} Consider a game $G$ in which $I = \{1,2,3,4,5\}$ and $A = \{s,r,t\}$. Consider a profile ${\bf a} = (a_1, a_2, a_3, a_4, a_5)$$=$$(s,r,t,r,r)$ and a permutation $\sigma = (12)$ (i.e. it exchanges the names of players $1$ and $2$). Then, for instance, 
		$$\pi_1(s,r,t,r,r) = \pi_1(a_1, a_2, a_3, a_4, a_5) = \pi_{\sigma(1)}(b_1,b_2,b_3,b_4,b_5) = $$
		$$ = \pi_2 (a_{\sigma(1)}, a_{\sigma(2)}, a_{\sigma(3)}, a_{\sigma(4)}, a_{\sigma(5)}) = \pi_2(r,s,t,r,r).$$
	\end{example}
	
	\begin{definition} \cite{parikh66context}
		The {\em commutative image} of an action profile $\ba \in A^I $ is given by $\#\ba = (\#(a, \ba))_{ a\in A}$ where $\#(a, \ba) = |\{i \in I : a_i = a\}|$. In other words, $\#(a, \ba)$ denotes the number of players playing
		action $a$ in the profile $\ba$, and $\#\ba$ is the vector of these numbers for all the different actions. 
	\end{definition}
	
	\begin{example}\label{Parikh1}
		Consider again, as in Example~\ref{Parikh}, ${\bf a} = (s,r,t,r,r)$. Then $\#(s, (s,r,t,r,r)) = 1$, $\#(r, (s,r,t,r,r)) = 3$ and $\#(t, (s,r,t,r,r)) = 1$. Thus, $\#(s,r,t,r,r) = (1,3,1)$.
	\end{example}
	
	\begin{lemma} \label{sigmaexists}
		For any $\ba,\bb\in A^I$, $\#\ba=\#\bb$ if and only if there exists a permutation $\sigma$ of $I$ such that $\ba=\bb\sigma$.
	\end{lemma}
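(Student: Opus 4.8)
The plan is to prove the two implications separately, the first by a direct counting argument and the second by assembling the required permutation block by block from the partition of $I$ induced by the actions.

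First I would treat the implication from right to left. Assuming $\ba=\bb\sigma$ for some $\sigma\in S_I$, we have $a_i=b_{\sigma(i)}$ for every $i\in I$. Fix $a\in A$. Then $\{i\in I:a_i=a\}=\{i\in I:b_{\sigma(i)}=a\}=\sigma\mnu(\{j\in I:b_j=a\})$, and since $\sigma$ is a bijection of $I$ these two sets have the same cardinality, so $\#(a,\ba)=\#(a,\bb)$. As $a$ was arbitrary, $\#\ba=\#\bb$.

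For the converse, suppose $\#\ba=\#\bb$. For each $a\in A$ put $J_a=\{i\in I:a_i=a\}$ and $K_a=\{i\in I:b_i=a\}$. The families $\{J_a\}_{a\in A}$ and $\{K_a\}_{a\in A}$ are partitions of $I$ into (possibly empty) blocks, and by hypothesis $|J_a|=\#(a,\ba)=\#(a,\bb)=|K_a|$ for every $a$. I would then choose, for each $a$, a bijection $\sigma_a\colon J_a\to K_a$, which exists precisely because the cardinalities agree (the empty case giving the empty bijection $\emptyset\to\emptyset$), and glue these together: since the $J_a$ cover $I$ disjointly and so do the $K_a$, the map $\sigma=\bigcup_{a\in A}\sigma_a$ is a well-defined element of $S_I$. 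By construction, if $a_i=a$ then $\sigma(i)\in K_a$, i.e.\ $b_{\sigma(i)}=a=a_i$; since this holds for all $i\in I$ we conclude $\bb\sigma=(b_{\sigma(1)},\ldots,b_{\sigma(n)})=(a_1,\ldots,a_n)=\ba$.

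The routine content here is just the counting and the gluing. The only point that needs care is aligning the direction of the reindexing with the right-action convention $\bb\sigma=(b_{\sigma(1)},\ldots,b_{\sigma(n)})$ fixed in the previous lemma, so that the block bijections must go from the blocks of $\ba$ to those of $\bb$ rather than the reverse; getting this backwards would produce $\ba=\bb\sigma\mnu$ instead. It is also worth noting that this lemma says exactly that the orbits of the right $S_I$-action on $A^I$ are the level sets of the commutative-image map $\#$.
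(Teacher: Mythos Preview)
Your proof is correct and follows essentially the same approach as the paper's. The only cosmetic difference is in the forward direction: the paper builds $\sigma$ greedily, coordinate by coordinate (sending $1$ to the first index $i$ with $b_i=a_1$, and so on), whereas you partition $I$ by action and glue block bijections; these are the same idea with different bookkeeping, and your version is arguably cleaner and more explicit about why the resulting map is a bijection.
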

	
	\begin{proof}
		Let $\ba=(a_1,a_2,\ldots,a_n)$. If we assume that $\#\ba=\#\bb$, then $\#(a_1,\ba)=\#(a_1,\bb)\ge 1$. We can define $\sigma(1)$ to be the first index $i$ such that $b_i=a_1$. Proceeding in this fashion, one may construct the required permutation $\sigma$.
		
		If  $\mathbf{a} = \mathbf{b}\sigma$, then by definition, for every $i$,  $b_{\sigma(i)} = a_i$, so for every ${a} \in A$,  $\{ i: b_{\sigma(i)} = {a}\} = \{ i: a_{i} = {a}\}$. Therefore,  $\#(a,\bb\sigma)=|\{ i: b_{\sigma(i)} = {a}\} | = \#(a,\ba)=| \{ i: a_{i} = {a}\}|$ and $\#(\mathbf{b}\sigma) = \#(\mathbf{a})$.
	\end{proof}

	\begin{definition} \cite{daskalakis2007computing} \label{defDP}        An anonymous game $G = \langle I,A,\{u^i_k\}\rangle$ consists of
		a set $I = \set{1,...,n} $ of $n \ge 2$ of players, a set $A$ of $s \ge 2$ actions, and a set of $ns$ utility functions, where $u^i_a$, with    
		$i \in I$ and  $a\in A$  is the utility of player $i$ when she plays action $a$, a function  mapping the set of partitions 
		$P_{n-1}=   \set{(x_a)_{a\in A}: x_a \in \mathbb{N} \mbox{ for all } a \in A,	\sum_{a\in A}  x_a  = n - 1}=\set{\#\ba_{-i}:\ba\in A^I}$ to  $\RR$. 
	\end{definition}
	
	\begin{example}\label{Parikh2}
		Consider again as in Example~\ref{Parikh}, ${\bf a} = (s,r,t,r,r)$. Since $n=5$, $u^3_t : P_{5-1} \rightarrow \RR$. Here $P_4 = \{(4,0,0), (3,1,0), \ldots, (0,1,3), (0,0,4)\}$.
	\end{example}
	
	To see how the utility functions $u^i_a$ are related to  the payoff functions introduced above, we denote with $\cdot_i:A^I\to A$ the $i$-th projection, and  we consider the map that assigns to each $\ba\in A^I$ the partition $\#\ba_{-i}$. Let $u^i_\cdot:A\times P_{n-1}\to \RR$ be the function\footnote{This is equivalent (via curryfication) to saying that $u^i_\cdot:A\to \RR^{P_{n-1}}$, given that $u^i_a:P_{n-1}\to\RR$.} that assigns to each pair $(a,\#\ba_{-i})$ the value $u^i_a(\#\ba_{-i})$. Under these conditions, the following diagram commutes:

	\[\xymatrix{A^I\ar[r]^{\pi_i}\ar[d]_{\langle\cdot_i,\#_{-i}\rangle}&\RR  \\
		A\times P_{n-1}\ar[ru]_{u^i_\cdot}&}  \]

	The corresponding equation is 
	\begin{equation}\label{piu}
	\pi_i(\ba)=u^i_{a_i}(\#\ba_{-i}),
	\end{equation}
	which lets us obtain the functions $u^i_a$ given the $\pi_i$, but this does not work in the other direction. The utility functions have less information, since we have removed the identities of the players playing each of the actions, keeping only the number of players choosing each of the actions.             
	
	\begin{example}
		Consider the case described in Example~\ref{Parikh2}:  $\langle \cdot_3, \#_{-3} \rangle$ applied on $(s,r,t,r,r)$ yields $(t, (1,3,0))$, since ${\bf a}_{-3} = (s,r, r,r)$ and $\#_{-3}\ba=(1,3,0)$. Applying $u^3_{\cdot}$ on $(t, (1,3,0))$ gives $u^3_t(1,3,0)$. The commutativity of the diagram indicates then, that $\pi_3(s,r,t,r,r) = u^3_t(1,3,0)$.
	\end{example}

	In \cite{brandt08symmetries}, the following definitions are given:
	
	\begin{definition} \label{defBetal}
		Let $G = (I,A,\set{\pi_i}_{i\in I} )$ be a game. $G$ is called
		\begin{itemize}
			\item \textit{anonymous} when  for all $i \in I$ and all $\ba,\bb \in A^I$, if $a_i =b_i$ and $\#\ba_{-i}  = \#\bb_{-i}$, then $\pi_i (\ba) = \pi_i (\bb)$. 
			\item \textit{symmetric} when  for all $i,j \in I$ and all $\ba,\bb \in A^I$, if $a_i =b_j$ and $\#\ba_{-i}  = \#\bb_{-j}$, then $\pi_i (\ba) = \pi_j (\bb)$. 
			\item \textit{self-anonymous} when  for all $i \in I$ and all $\ba,\bb \in A^I$, if  $\#\ba = \#\bb$, then $\pi_i (\ba) = \pi_i (\bb)$. 
			\item \textit{self-symmetric} when  for all $i,j \in I$ and all $\ba,\bb \in A^I$, if  $\#\ba = \#\bb$, then $\pi_i (\ba) = \pi_j (\bb)$. 
		\end{itemize}
	\end{definition}
	
	The two previous definitions agree on the meaning of anonymity in games:
	\begin{proposition}
		The definitions \ref{defDP} and \ref{defBetal} of an anonymous game are equivalent.
	\end{proposition}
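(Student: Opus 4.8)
The plan is to read Definition~\ref{defDP} as asserting that a game $G=(I,A,\{\pi_i\})$ ``is anonymous'' precisely when its payoff functions factor through the maps $\langle\cdot_i,\#_{-i}\rangle$, i.e.\ when there exist functions $u^i_a\colon P_{n-1}\to\RR$ satisfying equation~\eqref{piu}; and to read Definition~\ref{defBetal} as the invariance property that $\pi_i(\ba)=\pi_i(\bb)$ whenever $a_i=b_i$ and $\#\ba_{-i}=\#\bb_{-i}$. The proof then amounts to showing that these two conditions on $\{\pi_i\}$ coincide, which I would do by a double implication.

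For the easy direction, assume $G$ is anonymous in the sense of Definition~\ref{defDP}, so \eqref{piu} holds. If $a_i=b_i$ and $\#\ba_{-i}=\#\bb_{-i}$, then $\pi_i(\ba)=u^i_{a_i}(\#\ba_{-i})=u^i_{b_i}(\#\bb_{-i})=\pi_i(\bb)$, which is exactly the property in Definition~\ref{defBetal}. This is essentially the observation already recorded after the commuting diagram.

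For the converse, assume the invariance property of Definition~\ref{defBetal}. Fix $i\in I$ and $a\in A$; I must produce a total function $u^i_a\colon P_{n-1}\to\RR$. Given $x=(x_b)_{b\in A}\in P_{n-1}$, the identification $P_{n-1}=\set{\#\ba_{-i}:\ba\in A^I}$ stated in Definition~\ref{defDP} guarantees a profile $\ba$ with $\#\ba_{-i}=x$ (concretely, list $x_b$ copies of each $b\in A$ into the $n-1$ coordinates other than $i$ and put $a$ in coordinate $i$); set $u^i_a(x):=\pi_i(\ba)$. The one point that needs the hypothesis is well-definedness: if $\ba,\bb$ both satisfy $a_i=b_i=a$ and $\#\ba_{-i}=\#\bb_{-i}=x$, then Definition~\ref{defBetal} gives $\pi_i(\ba)=\pi_i(\bb)$, so the value does not depend on the chosen witness. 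With these functions in hand, applying the construction to $\ba$ itself yields $u^i_{a_i}(\#\ba_{-i})=\pi_i(\ba)$ for every profile $\ba$, i.e.\ \eqref{piu} holds and $G$ is anonymous in the sense of Definition~\ref{defDP}.

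The only genuine subtlety --- the main obstacle, though a mild one --- is the bookkeeping in the converse: one must check that $u^i_a$ is defined on \emph{all} of $P_{n-1}$, which is why the equality $P_{n-1}=\set{\#\ba_{-i}:\ba\in A^I}$ is invoked, and that the anonymity hypothesis is exactly strong enough to make the definition independent of the chosen representative profile. Everything else is a direct unwinding of equation~\eqref{piu} and the definitions.
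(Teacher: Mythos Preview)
Your proof is correct and follows essentially the same approach as the paper: the first direction is verbatim the paper's argument via equation~\eqref{piu}, and for the converse the paper simply asserts in one sentence that the invariance condition ensures \eqref{piu} well-defines the $u^i_a$, whereas you spell out explicitly the choice of representative profile and the well-definedness check. The content is the same; your version is just more detailed.
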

	
	\begin{proof}
		Assume that a game $G$ is anonymous according to definition \ref{defDP}. Let $i\in I$ be fixed,  and consider profiles $\ba$ and $\bb$ such that $a_i=b_i$ and $\#\ba_{-i}=\#\bb_{-i}$. Then, using equation (\ref{piu}) $\pi_i(\ba)=u^i_{a_i}(\#\ba_{-i})=u^i_{b_i}(\#\bb_{-i})=\pi_i(\bb)$.
		
		Conversely, the conditions from definition \ref{defBetal}, ensure that equation \eqref{piu} defines the functions $u^i_a$ on $P_{n-1}$.
	\end{proof}

	In the case of two players, Definition \ref{symmetricGame} captures the intuition about what a symmetric game should be, such as the Prisoner's Dilemma. However, for more than 3 players, this definition sets too many restrictions, resulting in games in which for all permutations of a given strategy profile, all players get the same payoff.  After proving this, we also prove that  Definition \ref{symmetricGame} is equivalent to the one of self-symmetric games from Definition \ref{defBetal}.
	
	\begin{lemma}\label{overdet}
		If $n\ge 3$, then in symmetric games  in the sense of Definition \ref{symmetricGame},  for all profiles $\ba\in A^I$, and all $i,j\in I$,
		\[\pi_i(\ba)=\pi_j(\ba).\]
	\end{lemma}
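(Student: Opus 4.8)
The plan is to fix an arbitrary profile $\ba\in A^I$ and arbitrary players $i,j\in I$ and to deduce $\pi_i(\ba)=\pi_j(\ba)$. If $i=j$ there is nothing to prove, so assume $i\neq j$; since $n\ge 3$ we may also fix a third player $k\in I\setminus\{i,j\}$. The idea is to invoke the symmetry condition in its form \eqref{eqDMredux}, $\pi_i(\ba)=\pi_{\sigma(i)}(\ba\sigma)$, three times along permutations $\sigma_1,\sigma_2,\sigma_3$ chosen so that the bookkeeping of players walks $i\mapsto j\mapsto k\mapsto j$ while the bookkeeping of profiles returns to $\ba$, that is, so that the composite $\sigma_1\sigma_2\sigma_3$ is the identity.

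Concretely, I would take $\sigma_1=(i\ j)$, $\sigma_2=(j\ k)$ and $\sigma_3=(i\ k\ j)$, and run the chain
\[\pi_i(\ba)=\pi_j\big(\ba(i\ j)\big)=\pi_k\big(\ba(i\ j)(j\ k)\big)=\pi_j\big(\ba(i\ j)(j\ k)(i\ k\ j)\big),\]
where the first equality uses \eqref{eqDMredux} with $\sigma_1$ at player $i$, the second uses it with $\sigma_2$ at player $j$ on the profile $\ba(i\ j)$, and the third uses it with $\sigma_3$ at player $k$ on the profile $\ba(i\ j)(j\ k)$, noting that the cycle $(i\ k\ j)$ sends $k$ to $j$.

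It then remains to observe that $\ba(i\ j)(j\ k)(i\ k\ j)=\ba$. By the right-action identity $(\ba\sigma)\tau=\ba(\sigma\tau)$ established earlier for profiles, this iterated product equals $\ba$ acted upon by the group element $(i\ j)\circ(j\ k)\circ(i\ k\ j)$, and a direct check of the images of $i$, $j$ and $k$ (every other player being fixed throughout) shows this composite is the identity permutation. Hence the rightmost term is exactly $\pi_j(\ba)$, and the chain collapses to $\pi_i(\ba)=\pi_j(\ba)$.

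The only genuine obstacle is locating such a length-three chain, together with seeing why nothing shorter can work: a single $\sigma$ with $\sigma(i)=j$ and $\ba\sigma=\ba$ would force $a_i=a_j$, and two steps are likewise insufficient for a profile with $a_i\neq a_j$. The structural reason a closed chain of length three succeeds — and the precise point at which the hypothesis $n\ge 3$ is used — is that the net effect on players of such a chain is a commutator $[\sigma_2^{-1},\sigma_1^{-1}]$, and commutators of transpositions (which exist exactly when a third player is available) generate $A_n$, a group acting transitively on $I$; the explicit triple above is simply one convenient witness of this fact. Once the three permutations are fixed, everything else is the short verification indicated.
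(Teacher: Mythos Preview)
Your proof is correct and follows essentially the same route as the paper's: three applications of \eqref{eqDMredux} along permutations whose product is the identity, using the availability of a third player $k$. The paper chooses the chain $(i\ k),\ (i\ j),\ (i\ k\ j)$ with player sequence $i\to k\to k\to j$, while you use $(i\ j),\ (j\ k),\ (i\ k\ j)$ with player sequence $i\to j\to k\to j$; both compute the same underlying identity $(i\ j\ k)(i\ k\ j)=\mathrm{id}$, so the difference is cosmetic. Your closing commentary on commutators and $A_n$ is correct but extra --- the paper simply exhibits the three permutations and verifies the composition directly.
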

	\begin{proof}
		Consider a profile of strategies $\ba$. Choose some three different indexes $i,j,k$. Under the permutation $(i\ k)$ we have by equation \eqref{eqDMredux} that $\pi_i(\ba)$ $=$ $\pi_k(\ba (i\ k))$. In turn, under the cycle $(i\ j)$ we have that  \[ \pi_k(\ba(i\ k))=\pi_k(\ba(i\ k)(i\ j)).\]
		Notice that $(i\ k)(i\ j)=(i\ j\ k)$.  Finally, under the the inverse permutation $(i\ k\ j)$ we have that \[\pi_k(\ba(i\ j\ k))=\pi_j(\ba(i\ j\ k)(i\ k\ j))=\pi_j(\ba).\]  Thus,  $\pi_i(\ba)=\pi_j(\ba)$. 
	\end{proof}

	\begin{proposition} \label{ssS}
		A game is self-symmetric if and only if it is symmetric as in Definition \ref{symmetricGame}.
	\end{proposition}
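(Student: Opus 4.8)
The plan is to reduce both implications to bookkeeping with the right action of $S_I$, using two earlier results as the only real ingredients: Lemma~\ref{sigmaexists}, which turns the hypothesis $\#\ba=\#\bb$ into the existence of a permutation relating $\ba$ and $\bb$, and Lemma~\ref{overdet}, which (for $n\ge 3$) collapses all the payoff functions $\pi_i$ into a single function on profiles. Everything else is a short composition computation.

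For the direction ``self-symmetric $\Rightarrow$ symmetric'' I would take an arbitrary $\sigma\in S_I$, a profile $\ba$ and an index $i$, and set $\bb=\ba\sigma$. First I observe that $\bb\sigma^{-1}=(\ba\sigma)\sigma^{-1}=\ba$ by the right-action axioms, so by the easy direction of Lemma~\ref{sigmaexists} we have $\#\ba=\#\bb$. Applying the definition of self-symmetry to the pair $(\ba,\bb)$ with the indices $i$ and $\sigma(i)$ then yields $\pi_i(\ba)=\pi_{\sigma(i)}(\bb)=\pi_{\sigma(i)}(\ba\sigma)$, which is exactly equation~\eqref{eqDMredux}. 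This direction does not need Lemma~\ref{overdet}.

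For the converse, assume $G$ is symmetric and take $i,j\in I$ and $\ba,\bb\in A^I$ with $\#\ba=\#\bb$. By Lemma~\ref{sigmaexists} pick $\sigma$ with $\ba=\bb\sigma$. Instantiating equation~\eqref{eqDMredux} at the profile $\bb$, the permutation $\sigma$ and the index $\sigma^{-1}(i)$ gives $\pi_{\sigma^{-1}(i)}(\bb)=\pi_{i}(\bb\sigma)=\pi_i(\ba)$. Finally I invoke Lemma~\ref{overdet} to rewrite the left-hand side as $\pi_j(\bb)$, obtaining $\pi_i(\ba)=\pi_j(\bb)$, as desired.

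The calculations themselves are routine; the one thing to get right is the direction of composition, i.e.\ whether one needs $\sigma$ or $\sigma^{-1}$ when passing between the relation $\ba=\bb\sigma$ and the identity $\pi_k(\bb)=\pi_{\sigma(k)}(\bb\sigma)$. The only substantive point is that the converse implication relies essentially on Lemma~\ref{overdet}: in general one cannot find a single permutation that simultaneously sends $\ba$ to $\bb$ and the index $i$ to $j$, so it is precisely the step ``$\pi_{\sigma^{-1}(i)}(\bb)=\pi_j(\bb)$'' that carries the weight of the hypothesis $n\ge 3$, and that is where I would be most careful.
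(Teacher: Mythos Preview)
Your proposal is correct and follows essentially the same route as the paper: both directions hinge on Lemma~\ref{sigmaexists} to translate between $\#\ba=\#\bb$ and the existence of a permutation, and the converse uses Lemma~\ref{overdet} to pass from $\pi_{\sigma^{-1}(i)}(\bb)$ (equivalently $\pi_{\sigma(i)}(\bb)$ in the paper's indexing) to $\pi_j(\bb)$. Your explicit flagging of the $n\ge 3$ hypothesis behind Lemma~\ref{overdet} is in fact a point the paper's own proof leaves implicit.
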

	\begin{proof}
		Assume that we have a self-symmetric game and take $\ba\in A^I$, $i\in I$ and $\sigma\in S_I$. Letting $j$ be $\sigma(i)$ and $\bb=\ba\sigma$, we have by Lemma \ref{sigmaexists} that $\#\ba=\#\bb$, so $\pi_i(\ba)=\pi_j(\bb)=\pi_{\sigma(i)}(\ba\sigma)$.
		
		Now assume that a game is symmetric according to Definition \ref{symmetricGame} and consider $i, j \in I$, $\ba, \bb\in A^I$ with $\#\ba=\#\bb$. By Lemma \ref{sigmaexists}, there exists $\sigma\in S_I$ such that $\bb=\ba\sigma$. Then $\pi_i(\ba)=\pi_{\sigma(i)}(\ba\sigma)=\pi_{\sigma(i)}(\bb)$, and by Lemma \ref{overdet}, this is equal to $\pi_j(\bb)$.

	\end{proof}

	\section{Symmetry as Permutation invariance}

	A  notion of symmetry which is better suited to our ends can be obtained by considering that the actions of the group of permutations over payoffs and profiles commute. To see this, first notice that we may consider the vector of functions $\bpi=(\pi_1,\pi_2,\ldots,\pi_n):A^I\to \RR^n$. The group $S_I$ acts on this vector as well, defining, as we did for action profiles, $\bpi\sigma=(\pi_{\sigma(1)},\pi_{\sigma(2)},\ldots,\pi_{\sigma(n)})$.
	
	\begin{definition}
		\label{permutationInvariant}
		A game $G$  is {\em invariant with respect to a permutation} $\sigma$ if for every strategy profile  $\ba\in A^I$ we have that 
		
		\[\bpi\sigma(\ba)=\bpi(\ba\sigma).\]
		
		\[\xymatrix{A^I\ar[r]^\bpi\ar[d]_\sigma&\RR^n\ar[d]^{\sigma}\\A^I\ar[r]_\bpi&\RR^n}\]
		
	\end{definition}
	
	This means that for all $i\in I$,
	
	\[\pi_{\sigma(i)}(a_1,\ldots,a_n)=\pi_i(a_{\sigma(1)},\ldots,a_{\sigma(n)}).  \]

	Notice that in the right hand side of this equation, player $i$ is playing the action $a_{\sigma(i)}$, the same one that player $\sigma(i)$ is playing in the right hand side, and they both get the same payoff. 
	
	An equivalent equation is:\footnote{In \cite{Bouyer17Nash} this characterization of symmetry is presented as a fix to Definition~\ref{symmetricGame}. \cite{Hefti17equilibria} uses an alternative characterization: $\pi_{i}(a_1,\ldots,a_n)=\pi_{\sigma(i)}(a_{\sigma^{-1}(1)},\ldots,a_{\sigma^{-1}(n)})$.}
	
	\begin{equation}\label{symmetric}
	\pi_i(a_1,\ldots, a_n)= \pi_{\sigma^{-1}(i)}(a_{\sigma(1)},\ldots, a_{\sigma(n)})\end{equation}

	One may also consider a game that is invariant under the action of some permutations:
	
	\begin{definition}
		A game $G$ is invariant under the set of permutations $X\subseteq S_I$ if it is invariant with respect to each permutation $\sigma\in X$.
	\end{definition}
	
	The group action over the set of profiles  determines a partition of $A^I$ in  \textit{orbits}: the orbit of a strategy profile $\ba$ is the set 
	\[O(\ba)=\set{{\bf b}\in A^I:{\bf b}=\ba\sigma \mbox{ for some }\sigma\in S_n}.\]
	The orbits  coincide with the sets of profiles with the same commutative image, this is: $O(\ba)=\set{\bb\in A^I:\#\bb=\#\ba}$.

	\begin{example}\label{exampleOver}
		Consider a game with three players, with the following payoff structure, in which each player has two actions to choose from, $a,b$ (player $1$ chooses rows, $2$ columns and $3$ matrices):
		\begin{center}
			$a$ \ \
			\begin{tabular}{l|l|l}
				
				&  $a$ & $b$ \\
				\hline
				$a$ & (10,10,10) & (5,5,5) \\
				\hline  $b$ & (5,5,5) & (-5,-5,-5)
			\end{tabular}
			\ \ \ \
			$b$\ \
			\begin{tabular}{c|l|l}
				& $a$ & $b$ \\
				\hline
				$a$ & (5,5,5) & (-5,-5,-5) \\   \hline
				$b$ & (-5,-5,-5) & (0,0,0)
			\end{tabular}
		\end{center}
		According to Lemma  \ref{overdet} and Proposition \ref{ssS}, this structure of payoffs under $S_3$ is self-symmetric. The orbits are $ \{(a,a,a)\}$, $\{(a,b,a), (b,a,a), (a,a,b)\}$, $\{(b,b,a), (b,a,b)$, $(a,b,b)\}$, and $\{(b,b,b)\}$. Here all the players get the same payoff, in all the profiles of the same orbit.
		
	\end{example}

	\begin{lemma}\label{composition}
		Let $G$ be a game and $\tau, \sigma$ two permutations under which the game is invariant. Then the game is invariant under their composition.
	\end{lemma}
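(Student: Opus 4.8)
The plan is to recast Definition~\ref{permutationInvariant} as the commutativity of a square and then observe that commuting squares compose. First I would record the bookkeeping: $S_n$ acts on the right of $\RR^n$ by permuting coordinates, $\mathbf{v}\sigma=(v_{\sigma(1)},\dots,v_{\sigma(n)})$ --- this is the vertical map $\sigma$ on the right of the diagram in Definition~\ref{permutationInvariant}, and the verification that it is a right action is word for word the one already given for profiles in $A^I$. I would also note that for every $\rho\in S_n$ and every $\ba\in A^I$ one has $(\bpi\rho)(\ba)=(\bpi(\ba))\rho$, since both sides are $(\pi_{\rho(1)}(\ba),\dots,\pi_{\rho(n)}(\ba))$. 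Consequently ``$G$ is invariant with respect to $\sigma$'' is the same statement as ``$(\bpi(\ba))\sigma=\bpi(\ba\sigma)$ for all $\ba\in A^I$''.

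With that reformulation in hand, the argument is a one-line chase. Fixing $\ba\in A^I$, I would expand $(\bpi(\ba))(\sigma\tau)$ using associativity of the right $S_n$-action on $\RR^n$, apply invariance with respect to $\sigma$ to the profile $\ba$, then invariance with respect to $\tau$ to the profile $\ba\sigma$, and finally associativity of the right $S_n$-action on $A^I$ (the Lemma following Definition~\ref{symmetricGame}):
\[(\bpi(\ba))(\sigma\tau)=\bigl((\bpi(\ba))\sigma\bigr)\tau=\bigl(\bpi(\ba\sigma)\bigr)\tau=\bpi\bigl((\ba\sigma)\tau\bigr)=\bpi\bigl(\ba(\sigma\tau)\bigr).\]
Since $\ba$ was arbitrary, this says exactly that $G$ is invariant with respect to $\sigma\tau$. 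Running the same chain with $\sigma$ and $\tau$ swapped gives invariance with respect to $\tau\sigma$ as well, so the order of composition is irrelevant.

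I do not expect a genuine obstacle here; the only thing to be careful about is keeping straight the three parallel $S_n$-actions involved --- on profiles $\ba\in A^I$, on the vector $\bpi$ of payoff functions, and on realized-payoff vectors $\bpi(\ba)\in\RR^n$ --- and checking they are mutually compatible so that a single rectangle commutes. Once those compatibilities are in place, the statement is just the familiar closure of ``diagram-commuting'' maps under composition, and nothing further is needed.
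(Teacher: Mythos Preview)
Your proof is correct and follows essentially the same route as the paper's. The paper works componentwise, substituting $j=\tau(i)$ into the $\sigma$-invariance equation to obtain $\bpi\sigma\tau(\ba)=\bpi\tau(\ba\sigma)$ and then applying $\tau$-invariance at the profile $\ba\sigma$; your version packages the same two steps via the right $S_n$-action on $\RR^n$ together with the compatibility $(\bpi\rho)(\ba)=(\bpi(\ba))\rho$, arriving at the identical chain of equalities.
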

	\begin{proof}
		By hypothesis, for all profiles $\ba, \bpi\sigma(\ba)=\bpi(\ba\sigma)$. This means that for every $j$, $\pi_{\sigma(j)}(\ba)=\pi_j(\ba\sigma)$. In particular, this is true for $j=\tau(i)$ so for all $i$, $\pi_{\sigma\tau(i)}(\ba)=\pi_{\tau(i)}(\ba\sigma)$.  Now we can write $\bpi\sigma\tau(\ba)=\bpi\tau(\ba\sigma)=\bpi(\ba\sigma\tau)$.
	\end{proof}
	
	\begin{theorem}
		The set of permutations under which  a game is invariant forms a group.
	\end{theorem}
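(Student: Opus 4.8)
Let $H \subseteq S_I$ denote the set of permutations under which the game $G$ is invariant. Since $S_I$ is itself a group, it suffices to check that $H$ is a subgroup: it is nonempty, closed under composition, and closed under inverses. (In fact, as $S_I$ is finite, nonemptiness together with closure under composition would already be enough, but verifying closure under inverses directly is just as quick.)

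Nonemptiness is immediate: for the identity permutation $e$ we have $\ba e = \ba$ and $\bpi e = \bpi$, so $\bpi e(\ba) = \bpi(\ba) = \bpi(\ba e)$ for every profile $\ba$, i.e. $e \in H$. Closure under composition is exactly Lemma~\ref{composition}: if $\sigma, \tau \in H$ then $\sigma\tau \in H$.

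For closure under inverses, suppose $\sigma \in H$, so that for every profile $\ba$ and every $i \in I$ we have $\pi_{\sigma(i)}(\ba) = \pi_i(\ba\sigma)$. The plan is to substitute $\ba\sigma^{-1}$ in place of $\ba$: this gives $\pi_{\sigma(i)}(\ba\sigma^{-1}) = \pi_i(\ba\sigma^{-1}\sigma) = \pi_i(\ba)$, where the last step uses that the operation is a right action (Lemma at the start of Section~2), so $\ba\sigma^{-1}\sigma = \ba(\sigma^{-1}\sigma) = \ba e = \ba$. Now reindex by setting $j = \sigma(i)$, equivalently $i = \sigma^{-1}(j)$, to obtain $\pi_j(\ba\sigma^{-1}) = \pi_{\sigma^{-1}(j)}(\ba)$ for all $j \in I$; since $j$ ranges over all of $I$, this says precisely $\bpi\sigma^{-1}(\ba) = \bpi(\ba\sigma^{-1})$ for every $\ba$, i.e. $\sigma^{-1} \in H$.

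Having verified the three subgroup conditions, $H$ is a subgroup of $S_I$ and in particular a group. The only place requiring any care is the index bookkeeping in the inverse step — keeping straight that the action is on the right and correctly relabeling $\sigma(i)$ versus $\sigma^{-1}(j)$ — but this is routine; nothing deeper is involved.
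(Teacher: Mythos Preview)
Your proof is correct. The overall structure matches the paper's: identity is in $H$, closure under composition is Lemma~\ref{composition}, and then one handles inverses. The only divergence is in that last step. The paper invokes finiteness of $S_I$: since $\sigma^k = ()$ for some $k$, one has $\sigma^{-1} = \sigma^{k-1}$, and closure under composition then gives $\sigma^{-1} \in H$. You instead verify invariance under $\sigma^{-1}$ directly by substituting $\ba\sigma^{-1}$ for $\ba$ and reindexing. Both are routine; your argument has the minor advantage of not relying on finiteness (so it would go through verbatim for an infinite permutation group), while the paper's avoids the index bookkeeping entirely. You even flag the finiteness shortcut yourself, so there is nothing to add.
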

	\begin{proof}
		It is clear that every game is invariant under the identity permutation $()$. Lemma \ref{composition} proves that the set is closed under the group multiplication. Finally, if $G$ is invariant under $\sigma$, since $S_n$ is a finite group, there is some $k$ such that $\sigma^k=()$, so $\sigma^{-1}=\sigma^{k-1}$. Using Lemma \ref{composition}, we see that $\sigma\mnu$ is in the set as well.
	\end{proof}
	
	Finally we propose a definition of symmetry as invariance under permutations and prove it equivalent to the one in Definition \ref{defBetal}.
	
	\begin{definition} \label{defSymPer}
		A game $G$ is symmetric if it is invariant under all the permutations in $S_I$.
	\end{definition}
	
	\begin{theorem}
		The definitions of symmetric game in \ref{defBetal} and \ref{defSymPer} are equivalent.
	\end{theorem}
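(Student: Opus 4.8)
The plan is to establish the two implications separately, in each case translating between the ``invariance under a permutation'' formulation of Definition~\ref{permutationInvariant} and the ``equal action plus equal truncated commutative image'' formulation of Definition~\ref{defBetal}. The common bridge is Lemma~\ref{sigmaexists}, together with the elementary remark that for any $i\in I$ the vector $\#\ba$ is obtained from $\#\ba_{-i}$ by increasing by one the entry indexed by $a_i$; consequently the pair of conditions ``$a_i=b_j$ and $\#\ba_{-i}=\#\bb_{-j}$'' is equivalent to the pair ``$a_i=b_j$ and $\#\ba=\#\bb$''.

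First I would prove that Definition~\ref{defSymPer} implies Definition~\ref{defBetal}. Fix $i,j\in I$ and $\ba,\bb\in A^I$ with $a_i=b_j$ and $\#\ba_{-i}=\#\bb_{-j}$; I want $\pi_i(\ba)=\pi_j(\bb)$. The heart of the argument is to exhibit a permutation $\sigma\in S_I$ with $\sigma(i)=j$ and $\ba=\bb\sigma$. Since $\ba_{-i}$ and $\bb_{-j}$ have the same commutative image, the greedy matching used in the proof of Lemma~\ref{sigmaexists} produces a bijection $I\setminus\{i\}\to I\setminus\{j\}$ identifying their coordinates; extending it by $i\mapsto j$ (which is consistent because $a_i=b_j$) gives such a $\sigma$. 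Invariance of $G$ under $\sigma$, read off at coordinate $i$ as $\pi_{\sigma(i)}(\bb)=\pi_i(\bb\sigma)$, then reads $\pi_j(\bb)=\pi_i(\ba)$, as required.

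For the converse I would assume $G$ symmetric in the sense of Definition~\ref{defBetal} and check invariance under an arbitrary $\sigma\in S_I$, i.e.\ that $\pi_{\sigma(i)}(\ba)=\pi_i(\ba\sigma)$ for every $\ba\in A^I$ and every $i\in I$. Writing $j=\sigma(i)$ and $\bb=\ba\sigma$, one has $b_i=(\ba\sigma)_i=a_{\sigma(i)}=a_j$ and, by Lemma~\ref{sigmaexists}, $\#\bb=\#\ba$, whence $\#\bb_{-i}=\#\ba_{-j}$. Applying Definition~\ref{defBetal} to the pair $(j,\ba)$ and $(i,\bb)$ yields $\pi_j(\ba)=\pi_i(\bb)$, which is precisely $\pi_{\sigma(i)}(\ba)=\pi_i(\ba\sigma)$.

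I expect the only genuinely delicate point to be the permutation construction in the first implication: Lemma~\ref{sigmaexists} only guarantees \emph{some} witness of $\ba=\bb\sigma$, with no control over where $i$ is sent, whereas here $\sigma(i)=j$ is indispensable. This is why one must either reuse the constructive content of that lemma's proof for the truncated profiles $\ba_{-i}$ and $\bb_{-j}$, or equivalently first move $b_j$ into position $i$ by pre-composing with the transposition $(i\,j)$ and then match the remaining coordinates while fixing $i$. Once this permutation is in hand, both directions are pure bookkeeping with commutative images.
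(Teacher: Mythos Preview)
Your proposal is correct and follows essentially the same approach as the paper's own proof: in each direction you set $\bb=\ba\sigma$ (or $\ba=\bb\sigma$), use Lemma~\ref{sigmaexists} to link commutative images with permutations, and read off the desired equality from the invariance equation. The only difference is cosmetic labeling (the paper sets $j=\sigma^{-1}(i)$ and $\sigma(j)=i$ where you set $j=\sigma(i)$ and $\sigma(i)=j$), and you are somewhat more explicit than the paper about why a permutation with the prescribed value at the distinguished index can actually be found.
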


	\begin{proof}
		Consider a fixed permutation $\sigma$, a profile $\ba$ and $i\in I$. Letting $\bb=\ba\sigma$, and $j=\sigma^{-1}(i)$, we have that $b_j=a_{\sigma(j)}=a_i$ and therefore $\#\ba_{-i}= \#\bb_{-j}$, so by Definition \ref{defBetal}, if $G$ is symmetric, $\pi_i(\ba)=\pi_j(\bb)=\pi_{\sigma^{-1}(i)}(\ba\sigma)$.
		
		Next we choose profiles $\ba$ and $\bb$ such that $a_i=b_j$ and $\#\ba_{-i}= \#\bb_{-j}$. Then we define $\sigma(j)=i$ and complete the permutation so that $\bb=\ba\sigma$. Then $\pi_j(\bb)=\pi_j(\ba\sigma)=\pi_{\sigma(j)}(\ba)=\pi_i(\ba)$.
	\end{proof}
	
	\section{Roles players play}
	
	An advantage of defining symmetry in games in terms of permutations is that we can relax some of the conditions to find more general, but still useful, definitions. While its characterization based on commutative images involves a loss of information, the use of permutations allows us to keep track of which player is playing each of the actions.
	
	We start by  isolating the effects  a player's action has over the payoffs of another. The payoffs for a player $i$ are given by the function $\pi_i:A^I\to\RR$. We  decompose $A^I$ as $A_i\times A_j\times A_{-ij}$, where $A_{-ij}=\prod_{k\notin \set{i,j}} A_k$. There is a bijective correspondence between the functions from $A_i\times A_j\times A_{-ij}$ to $\RR$ and the functions from $A_{-ij}$ to the set of real-valued functions on $A_i\times A_j$. We use this correspondence to define the following notion:

	\begin{definition}
		Given two different players,  $i, j$, the {\em role} that player $j$ plays for player $i$ is the function  $r_i^j:A_{-ij}\to\RR^{A_i\times A_j}$ such that for all $\ba=(a_1,\ldots,a_i,\ldots,a_j,\ldots,a_n)$,
		\[[r_i^j(\ba_{-ij})](a_i,a_j)=\pi_i(\ba),\]

		where $\ba_{-ij}$ is the projection of $\ba$ over $\displaystyle A_{-ij}$.
		
		If $i=j$, we define   $r_i^i:A_{-i}\to \RR^{A_i}$ to be such that   for all $\ba\in A^I$, \[[r_i^i(\ba_{-i})](a_i)=\pi_i(\ba).\]
	\end{definition}
	If we want to determine whether the role that player $j$ plays for player $i$ is somehow equivalent to the one that player $l$ plays for player $k$, we need to establish a correspondence between $A_{-ij}$ and $A_{-kl}$.  Any such correspondence is a permutation $\sigma$ of $I$ such that $\sigma(k)=i$ and $\sigma(l)=j$, but there is not a canonical way of choosing one. Once such a permutation is chosen, we can say that the roles are the same under this permutation if for every profile $\ba$ the payoff for player $i$ when they play action $a_i$ and player $j$ plays $a_j$ is the same as the payoff for player $k$ when they play action $a_i$ and player $l$ plays action $a_j$. The equation for this is: 
	\begin{multline} \label{pieqpi} \pi_i(a_1,\ldots, a_i,\ldots,a_j,\ldots,a_k,\ldots,a_l,\ldots,a_n)=\\
	\pi_k(a_{\sigma(1)},\ldots, a_{\sigma(i)},\ldots,a_{\sigma(j)},\ldots,a_i,\ldots,a_j,\ldots,a_{\sigma(n)})
	\end{multline}
	
	We can write equation (\ref{pieqpi}) as:
	\begin{equation}\label{pieqpi2}
	\pi_i(a_i,a_j,\ba_{-ij})=\pi_k(a_i,a_j,\ba_{-ij}\sigma), 
	\end{equation}
	
	or, more compactly:
	\begin{equation}\label{pieqpi3}
	\pi_i(\ba)=\pi_k(\ba\sigma). 
	\end{equation}

	Since this holds for all $\ba\in A^I$, we can abstract away the variables $a_i$ and $a_j$ to express this relation between roles:
	
	\begin{definition}
		A \textit{role $r_i^j$ is the same as the role $r_k^l$ under a permutation $\sigma$} such that $\sigma(k)=i$ and $\sigma(l)=j$ if for every profile $\ba\in A^I$,
		\begin{equation}\label{roeqro}
		r_i^j(\ba_{-ij})=r_k^l(\ba_{-ij}\sigma).
		\end{equation}
		
		For the case in which $i=j$, we write
		\begin{equation}\label{roeqroik}
		r_i^i(\ba_{-i})=r_k^k(\ba_{-i}\sigma),
		\end{equation}
		where $\sigma$ is any permutation such that $\sigma(k)=i$.
	\end{definition}

	Different sets of permutations in the conditions above yield diverse sets of equations, and therefore, different definitions of binary relations over the set of roles in the game. First, we examine the case where we ask the condition to hold for every acceptable permutation:
	\begin{definition}\label{blindr}
		$r_i^j{\bf B_r}r_k^l$: the roles $r_i^j$ and $r_k^l$ are \textit{blindly related} if \eqref{roeqro} holds for all $\ba$ and $\sigma$ such that $\sigma(k)=i$ and $\sigma(l)=j$.
	\end{definition}
	
	This relation is not reflexive if there are at least three players: it is easy to give an example of a game in which three players can choose action $a, b$ or $c$ and $\pi_1((a,b,c))\neq\pi_1((a,b,c)(23))=\pi_1((a,c,b))$, so $r_1^1\bBr r_1^1$ does not hold.
	
	\begin{lemma}\label{blindST}
		The relation ${\bf B_r}$ is symmetric and transitive.
	\end{lemma}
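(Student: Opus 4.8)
My plan is to reduce everything to the pointwise reformulation \eqref{pieqpi3} of the blind relation. By the defining correspondence for roles, i.e. the equivalence between \eqref{roeqro} and \eqref{pieqpi3}, the statement $r_i^j\bBr r_k^l$ holds exactly when $\pi_i(\ba)=\pi_k(\ba\sigma)$ for every profile $\ba\in A^I$ and every permutation $\sigma$ with $\sigma(k)=i$ and $\sigma(l)=j$. The only external ingredient I need is that $\ba\mapsto\ba\sigma$ is a right action, so that $(\ba\sigma)\tau=\ba(\sigma\tau)$ and $(\ba\sigma)\sigma\mnu=\ba$.

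For symmetry, suppose $r_i^j\bBr r_k^l$ and let $\tau$ be an arbitrary permutation with $\tau(i)=k$ and $\tau(j)=l$; the goal is $\pi_k(\ba)=\pi_i(\ba\tau)$ for all $\ba$. The key step is to pass to $\sigma:=\tau\mnu$: this $\sigma$ satisfies $\sigma(k)=i$ and $\sigma(l)=j$, hence is admissible in the hypothesis. Instantiating the hypothesis at the profile $\ba\tau$ gives $\pi_i(\ba\tau)=\pi_k\bigl((\ba\tau)\tau\mnu\bigr)$, and the right-action law collapses $(\ba\tau)\tau\mnu$ to $\ba$, yielding $\pi_i(\ba\tau)=\pi_k(\ba)$. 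Since $\ba$ and $\tau$ were arbitrary, $r_k^l\bBr r_i^j$.

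For transitivity, suppose $r_i^j\bBr r_k^l$ and $r_k^l\bBr r_m^p$, and let $\rho$ be any permutation with $\rho(m)=i$ and $\rho(p)=j$; the goal is $\pi_i(\ba)=\pi_m(\ba\rho)$ for all $\ba$. Here the idea is to factor $\rho$ through the intermediate role: pick any permutation $\mu$ with $\mu(m)=k$ and $\mu(p)=l$, and put $\sigma:=\rho\mu\mnu$. One checks that $\sigma(k)=\rho(\mu\mnu(k))=\rho(m)=i$ (since $\mu(m)=k$) and similarly $\sigma(l)=j$, and that $\sigma\mu=\rho$. Applying the second relation to the profile $\ba\sigma$ with the permutation $\mu$ gives $\pi_k(\ba\sigma)=\pi_m\bigl((\ba\sigma)\mu\bigr)=\pi_m(\ba\rho)$, while applying the first relation to $\ba$ with $\sigma$ gives $\pi_i(\ba)=\pi_k(\ba\sigma)$. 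Chaining the two equalities gives $\pi_i(\ba)=\pi_m(\ba\rho)$ for every $\ba$ and every admissible $\rho$, i.e. $r_i^j\bBr r_m^p$.

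The diagonal case ($i=j$, $k=l$, governed by \eqref{roeqroik}) is handled by the same two arguments, now with the single index constraint $\sigma(k)=i$ in place of two; the auxiliary permutations $\tau\mnu$ and $\rho\mu\mnu$ still work verbatim. I do not anticipate a genuine obstacle: the whole content is the choice of the auxiliary permutations and the routine check that they satisfy the index constraints of Definition~\ref{blindr} — which is exactly what licenses applying the universally quantified hypothesis — together with one careful use of $(\ba\sigma)\tau=\ba(\sigma\tau)$ to collapse $\tau\mnu$ (resp. $\mu\mnu$) against its partner. It is worth keeping in mind that $\bBr$ is not reflexive, so the lemma establishes that $\bBr$ is a partial equivalence relation rather than a full one.
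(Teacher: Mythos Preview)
Your symmetry argument is exactly the paper's: pass to $\tau\mnu$ and instantiate at $\bb\tau$.

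For transitivity your route is genuinely different and cleaner than the paper's. Both aim to factor the given permutation $\rho$ as $\sigma\mu$ with $\sigma(k)=i$, $\sigma(l)=j$ and $\mu(m)=k$, $\mu(p)=l$, then chain the two hypotheses. The paper tries specific formulas, essentially $\mu=(l\,p)(k\,m)$ and $\sigma=\rho\mu\mnu$, and these fail to satisfy the index constraints when $\{k,l\}$ and $\{m,p\}$ overlap in certain ways; the paper then patches this with a case analysis on the coincidences among $k,l,m,p$ (four separate sub-cases plus the diagonal case). You instead pick \emph{any} $\mu$ with $\mu(m)=k$, $\mu(p)=l$ — which always exists once $m\neq p$ and $k\neq l$, since a partial injection on two points of a finite set extends to a bijection — and then $\sigma:=\rho\mu\mnu$ automatically satisfies $\sigma(k)=i$, $\sigma(l)=j$ with $\sigma\mu=\rho$, no cases needed. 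What the paper's approach buys is an explicit, concrete factorization in each case; what yours buys is a uniform one-line argument that sidesteps the bookkeeping entirely. Your treatment of the diagonal case $i=j$, $k=l$, $m=p$ is likewise correct (one index constraint instead of two, same trick).
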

	
	\begin{proof}
		Suppose that for all $\ba\in A^I$ and $\sigma$ such that $\sigma(k)=i$ and $\sigma(l)=j$, $\pi_i(\ba)=\pi_k(\ba\sigma)$. Let $\bb\in A$ and $\tau$ be a permutation such that $\tau(i)=k$ and $\tau(j)=l$. Then $\tau^{-1}$ satisfies that $\tau^{-1}(k)=i$ and $\tau^{-1}(l)=j$ so by hypothesis, taking $\ba=\bb\tau$ and $\sigma=\tau^{-1}$,  $\pi_k(\bb)=\pi_k(\bb\tau\tau^{-1})=\pi_i(\bb\tau)$.
		
		For transitivity, assume that $r_i^j{\bf B_r}r_k^l$ and $r_k^l{\bf B_r}r_s^t$. Then we have that for any $\ba\in A^I$ and $\sigma$ such that $\sigma(k)=i$ and $\sigma(l)=j$, $\pi_i(\ba)=\pi_k(a\sigma)$, and for any $\bb\in A^I$ and $\tau$ such that $\tau(s)=k$ and $\tau(t)=l$, $\pi_k(\bb)=\pi_s(\bb\tau)$. 
		
		Now consider $\bc\in A^I$ and let $\gamma$ be a permutation such that $\gamma(s)=i$ and $\gamma(t)=j$. We must factor $\gamma$ as the composition of two permutations, $\sigma$ and $\tau$ satisfying the conditions above, in order to be able to use the hypothesis. 
		
		If the players $k, l, s$ and $t$ are different, or $k=s$, or $t=l$, or both, we let $\sigma=\gamma(k\ s)(l\ t)$ and $\tau=(l\ t)(k\ s)$, then $\sigma$ and  $\tau$ are as in the hypothesis, with $\gamma=\sigma\tau$. It follows that $\pi_i(\bc)=\pi_k(\bc\sigma)=\pi_s(\bc\sigma\tau)=\pi_s(\bc\gamma)$, so $r_i^j{\bf B_r}r_s^t$.
		
		Notice that if one of the equalities $i=j$, $k=l$ or $s=t$ hold, then the other two hold as well. In this case it is enough to let $\sigma=\gamma(ks)$ and $\tau=(ks)$.
		
		If 	$t=k$ and $l=s$ let $\sigma=\gamma(lk)$ and $\tau=(lk)$, while if $t=k$ but $l\neq s$, we should take $\sigma=\gamma(lks)$ and $\tau=(lsk)$.
		
		Finally, if $t\neq k$ and $l=s$ let $\sigma=\gamma(klt)$ and $\tau=(ktl)$.

	\end{proof}

	It follows from the previous Lemma that if there is a role that is blindly related to a role $r_i^j$, then $r_i^j$ is blindly related to itself. If we recall that for every $\ba_{-ij}\in A_{-ij}$, $r_i^j(\ba_{-ij})$ is a function from $A_i\times A_j\to\RR$, we have that for every permutation $\sigma$ leaving $i$ and $j$ unchanged, and every $\ba_{-ij}$, the functions $r_i^j(\ba_{-ij})$and $r_i^j(\ba_{-ij}\sigma)$ coincide\footnote{Alternatively, we can say that the role $r_i^j$ is invariant under the subgroup of permutations that leave $i$ and $j$ fixed.}.  This relation holds when player $i$ is indifferent (or `blind' with respect) to who are the players choosing all the actions in a profile except for those from players  $i$ and $j$ (a condition akin to anonymity), and clearly this is not always the case. If  $i=j$, this indifference extends to all players other than $i$. 
	
	Furthermore, if we have that $r_i^j{\bf B_r}r_k^l$, then both roles share this characteristic and their functions $r_i^j(\ba_{-ij})$ and $r_k^l(\ba_{-kl}\sigma)$ agree for all $\sigma$ such that $\sigma(k)=j$ and $\sigma(l)=j$.
	
	Another consequence of $r_i^j{\bf B_r}r_k^l$  is that for all $\ba_{-ij}$ and $\bb_{-kl}$ that are permutations of each other, $r_i^j(\ba_{-ij})=r_k^l(\bb_{-kl})$.

	Now we examine the case in which there is just one permutation under which the roles are the same:
	\begin{definition}\label{twistedr}
		$r_i^j{\bf T_r}r_k^l$: the roles $r_i^j$ and $r_k^l$ are \textit{twistedly related} if  there exists $\sigma$ such that $\sigma(k)=i$ and $\sigma(l)=j$, verifying that \eqref{roeqro} holds for all $\ba$. 
	\end{definition}
	
	Here the existing permutation $\sigma$ prescribes how to `twist' the elements in a profile $\ba$ in such a way that the function $r_i^j(\ba)$ on $A_i\times A_j$ is the same as the function $r_k^l(\ba\sigma)$ on $A_k\times A_l$, acting as a translation between the points of view of players $i$ and $k$.
	
	\begin{lemma}\label{Tr}
		The relation ${\bf T_r}$ is reflexive, symmetric and transitive.
	\end{lemma}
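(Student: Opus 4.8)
The plan is to work with the $\pi$-formulation \eqref{pieqpi3} rather than directly with \eqref{roeqro}: recall that $r_i^j\bTr r_k^l$ holds precisely when there is a permutation $\sigma$ with $\sigma(k)=i$, $\sigma(l)=j$ and $\pi_i(\ba)=\pi_k(\ba\sigma)$ for every $\ba\in A^I$. The decisive feature of $\bTr$, in contrast with $\bBr$, is that it is an \emph{existential} condition, so to establish each of the three properties it suffices to build a single witnessing permutation out of the ones we are handed, using only the right-action axioms $\ba()=\ba$ and $(\ba\sigma)\tau=\ba(\sigma\tau)$ from the lemma on the action of $S_n$ on profiles.

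For reflexivity I would take the identity permutation $()$ as a witness for $r_i^j\bTr r_i^j$: it satisfies $()(i)=i$ and $()(j)=j$, and $\pi_i(\ba)=\pi_i(\ba())$ trivially; the case $i=j$ is the same. For symmetry, given a witness $\sigma$ for $r_i^j\bTr r_k^l$, I would check that $\sigma\mnu$ witnesses $r_k^l\bTr r_i^j$: since $\sigma(k)=i$ and $\sigma(l)=j$ we get $\sigma\mnu(i)=k$ and $\sigma\mnu(j)=l$, and substituting $\ba=\bb\sigma\mnu$ into $\pi_i(\ba)=\pi_k(\ba\sigma)$ and using $(\bb\sigma\mnu)\sigma=\bb$ yields $\pi_k(\bb)=\pi_i(\bb\sigma\mnu)$. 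This mirrors the symmetry argument already used in Lemma \ref{blindST}.

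For transitivity, suppose $\sigma$ witnesses $r_i^j\bTr r_k^l$ (so $\sigma(k)=i$, $\sigma(l)=j$) and $\tau$ witnesses $r_k^l\bTr r_s^t$ (so $\tau(s)=k$, $\tau(t)=l$). I would propose $\gamma=\sigma\tau$ as a witness for $r_i^j\bTr r_s^t$. Admissibility is immediate: $\gamma(s)=\sigma(\tau(s))=\sigma(k)=i$ and $\gamma(t)=\sigma(\tau(t))=\sigma(l)=j$. For the equations, for every $\bc\in A^I$,
\[\pi_i(\bc)=\pi_k(\bc\sigma)=\pi_s\big((\bc\sigma)\tau\big)=\pi_s\big(\bc(\sigma\tau)\big)=\pi_s(\bc\gamma),\]
where the first equality is the first hypothesis applied to $\bc$, the second is the second hypothesis applied to $\bc\sigma$, and the third is the right-action axiom. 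Hence $r_i^j\bTr r_s^t$. The degenerate case $i=j$ (which forces $k=l$ and $s=t$) is the identical computation with only the single constraints $\sigma(k)=i$ and $\tau(s)=k$ to check.

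I do not expect a genuine obstacle here; the only point requiring care is the composition convention, since the action in this paper satisfies $(\ba\sigma)\tau=\ba(\sigma\tau)$ with $(\sigma\tau)(i)=\sigma(\tau(i))$, so the witness for the composite relation must be $\sigma\tau$ (apply $\tau$ first), not $\tau\sigma$. It is worth remarking that transitivity is effortless precisely because $\bTr$ asks only for the \emph{existence} of a twisting permutation, whereas the universal quantifier in the definition of $\bBr$ is exactly what forced the elaborate factorization-by-cases in the proof of Lemma \ref{blindST}.
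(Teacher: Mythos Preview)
Your proof is correct and follows essentially the same approach as the paper: the identity witnesses reflexivity, $\sigma\mnu$ witnesses symmetry, and the composite $\sigma\tau$ witnesses transitivity, with the same verification of the admissibility conditions and the same chain of equalities $\pi_i(\ba)=\pi_k(\ba\sigma)=\pi_s(\ba\sigma\tau)$. Your added remarks on the composition convention and the contrast with the universal quantifier in $\bBr$ are accurate and do not alter the argument.
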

	
	\begin{proof} Reflexivity is immediate using the identity permutation.

		Symmetry: Suppose that $r_i^j{\bf T_r}r_k^l$, that is, that there exists $\sigma$ such that $\sigma(k)=i$ and $\sigma(l)=j$, and for all $\ba$, $\pi_i(\ba)=\pi_k(\ba\sigma)$. Then there exists a permutation, namely $\sigma^{-1}$, such that $\sigma^{-1}(i)=k$ and $\sigma^{-1}(j)=l$, and for all $\ba$, $\pi_k(\ba)=\pi_k(\ba\sigma^{-1}\sigma)=\pi_i(\ba\sigma^{-1})$, so $r_k^l{\bf T_r}r_i^j$.
		
		Transitivity: Suppose that $r_i^j{\bf T_r}r_k^l$ and $r_k^l{\bf T_r}r_s^t$. Then  there is a permutation $\sigma$  such that for every $\ba\in A^I$, $\sigma(k)=i$, $\sigma(l)=j$, and $\pi_i(\ba)=\pi_k(\ba\sigma)$.  At the same time, there is a $\tau$ such that for every $\bb\in A^I$, $\tau(s)=k$, $\tau(t)=l$, and $\pi_k(\bb)=\pi_s(\bb\tau)$.  Putting $\bb=\ba\sigma$ we get that for each $\ba\in A^I$ the permutation $\sigma\tau$ is such that $\sigma\tau(s)=i$, $\sigma\tau(t)=j$ and $\pi_i(\ba)=\pi_k(\ba\sigma)=\pi_k(\bb)=\pi_s(\bb\tau)=\pi_s(\ba\sigma\tau)$.
		
	\end{proof}
	
	Finally, we consider the case in which the existing permutation depends on each profile:
	
	\begin{definition}\label{simulatesr}
		$r_i^j{\bf M_r}r_k^l$: the role $r_i^j$ \textit{simulates} role $r_k^l$ if  for each $\ba\in A$ there exists $\sigma_\ba$ such that $\sigma_\ba(k)=i$,  $\sigma_\ba(l)=j$, and $[r_i^j(\ba_{-ij})](a_i,a_j)=[r_k^l(\ba_{-ij}\sigma_\ba)](a_i,a_j)$. If we write $\bb=\ba\sigma_\ba$, then the previous equation becomes  $[r_i^j(\ba_{-ij})](a_i,a_j)=[r_k^l(\bb_{-kl})](b_k,b_l)$, given that $b_k=a_{\sigma(k)}=a_i$ and $b_l=a_j$.
	\end{definition}
	
	Notice that here the roles $r_i^j$ and $r_k^l$ are not equal under a single permutation. Instead, there is a map that for each profile $\ba\in A$ yields a permutation $\sigma_\ba$ such that $\pi_i(\ba)=\pi_k(\ba\sigma_{\ba})$. We can think of this map as establishing a relation of `simulation' between the roles $r_i^j$  and $r_k^l$ in a different way for each particular profile.
	
	It is clear that considering the permutation $\sigma_\ba$ to be the identity for every profile $\ba$, we can show that the relation $\bf M_r$ is reflexive. On the other hand, ${\bf M_r}$ ensures that the range of $r_i^j$ becomes a subset of that of $r_k^l$, but the converse is not true:
	
	\begin{example} \label{exSymM}
		To see that the relation ${\bf M_r}$ is not symmetric consider a game  with four players and  set of actions $\set{a,b,c,d}$ in which all the payoffs for all players are zero except for $\pi_2(a,b,c,d)=1$. We check that $r_1^2{\bf M_r}r_2^1$. The only permutations such that $\sigma(1)=2$ and $\sigma(2)=1$ are $(12)$ and $(12)(34)$. Since the only profiles that could yield $(a,b,c,d)$ under these permutations are  $(b,a,c,d)$ and $(b,a,d,c)$ it will be enough to show an appropriate $\sigma$ for each of those two profiles. We assign $(12)(34)$ to $(b,a,c,d)$ and $(12)$ to $(b,a,d,c)$ so  that
		$\pi_1(b,a,c,d)=\pi_2((b,a,c,d)(12)(34))=\pi_2(a,b,d,c)=0$ and   $\pi_1(b,a,d,c)=\pi_2((b,a,d,c)(12))=\pi_2(a,b,d,c)=0$.
		
		On the other hand, $r_2^1{\bf M_r}r_1^2$ is not the case, since $(a,b,c,d)(12)(34)=(b,a,d,c)$ and $(a,b,c,d)(12)=(b,a,c,d)$, so for both permutations $\pi_2(a,b,c,d)=1\neq\pi_1((a,b,c,d)\sigma)=0$.

	\end{example}

	\begin{lemma}\label{roles-transitivity} The  relation  ${\bf M_r}$ is transitive.
	\end{lemma}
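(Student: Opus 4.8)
The plan is to carry the witnessing permutations through the two simulation hypotheses, composing them profile by profile. Suppose $r_i^j{\bf M_r}r_k^l$ and $r_k^l{\bf M_r}r_s^t$; I must produce, for each profile, a single permutation witnessing $r_i^j{\bf M_r}r_s^t$. Recall that $r_i^j{\bf M_r}r_k^l$ amounts to: for every $\ba\in A^I$ there is $\sigma_\ba$ with $\sigma_\ba(k)=i$, $\sigma_\ba(l)=j$ and $\pi_i(\ba)=\pi_k(\ba\sigma_\ba)$; likewise $r_k^l{\bf M_r}r_s^t$ gives, for every $\bb\in A^I$, a permutation $\tau_\bb$ with $\tau_\bb(s)=k$, $\tau_\bb(t)=l$ and $\pi_k(\bb)=\pi_s(\bb\tau_\bb)$.

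First I would fix an arbitrary $\ba\in A^I$ and take the witness $\sigma_\ba$ from the first hypothesis. Then—this is the one point that needs attention—I would apply the second hypothesis not to $\ba$ but to the already-transformed profile $\bb:=\ba\sigma_\ba$, obtaining $\tau_\bb$, and set $\rho_\ba:=\sigma_\ba\tau_\bb$. For the index conditions, using the composition rule $(\sigma\tau)(x)=\sigma(\tau(x))$ recorded in the lemma on the right action of $S_n$, one gets $\rho_\ba(s)=\sigma_\ba(\tau_\bb(s))=\sigma_\ba(k)=i$ and similarly $\rho_\ba(t)=\sigma_\ba(\tau_\bb(t))=\sigma_\ba(l)=j$. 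For the payoff equation, using the right-action identity $(\ba\sigma_\ba)\tau_\bb=\ba(\sigma_\ba\tau_\bb)=\ba\rho_\ba$, one chains $\pi_i(\ba)=\pi_k(\ba\sigma_\ba)=\pi_k(\bb)=\pi_s(\bb\tau_\bb)=\pi_s(\ba\rho_\ba)$. Since $\ba$ was arbitrary, the assignment $\ba\mapsto\rho_\ba$ witnesses $r_i^j{\bf M_r}r_s^t$. The degenerate case $i=j$ (hence $k=l$ and $s=t$) goes through verbatim with $A_{-i}$ in place of $A_{-ij}$ and only the single condition $\rho_\ba(s)=i$ to check.

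There is no real obstacle here: the argument is a direct diagram chase. The only thing one must not overlook is that the simulation relation permits the witnessing permutation to vary with the profile, so it is legitimate—indeed necessary—to feed the second relation the transformed profile $\ba\sigma_\ba$ rather than $\ba$ itself; keeping the composition order of permutations consistent with the convention $(\ba\sigma)\tau=\ba(\sigma\tau)$ fixed earlier is the only bookkeeping subtlety.
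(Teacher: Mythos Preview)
Your proof is correct and follows essentially the same route as the paper: both compose the profile-dependent witnesses by applying the second hypothesis to the transformed profile $\bb=\ba\sigma_\ba$ and then take $\rho_\ba=\sigma_\ba\tau_\bb$. Your write-up is in fact slightly more careful than the paper's, making explicit the composition convention $(\sigma\tau)(x)=\sigma(\tau(x))$ and noting the degenerate case $i=j$.
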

	
	\begin{proof}
		Suppose that $r_i^j{\bf M_r}r_k^l$ and $r_k^l{\bf M_r}r_s^t$. Then for each $\ba\in A$ there is a permutation $\sigma_\ba$ such that $\sigma_\ba(k)=i$, $\sigma_\ba(l)=j$, and $\pi_i(\ba)=\pi_k(\ba\sigma_\ba)$, while for each $\bb\in A$ there is a $\tau_\bb$ such that  $\tau_\bb(s)=k$, $\tau_\bb(t)=l$, and $\pi_k(\bb)=\pi_s(\bb\tau_\bb)$.  Putting $\bb=\ba\sigma_\ba$ we get that for each $\ba\in A$ the permutation $\sigma_\ba\tau_\bb$ is such that $\sigma_\ba\tau_\bb(s)=i$, $\sigma_\ba\tau_\bb(t)=j$ and $\pi_i(\ba)=\pi_k(\ba\sigma_\ba)=\pi_k(\bb)=\pi_s(\bb\tau_\bb)=\pi_s(\ba\sigma_\ba\tau_\bb)$.
	\end{proof}

	\begin{lemma}\label{roles-inclusion}
		The relations defined above are ordered by inclusion: $\bf B_r\subseteq T_r\subseteq M_r$.
	\end{lemma}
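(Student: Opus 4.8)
The plan is to notice that the three relations $\bBr$, $\bTr$, $\bMr$ compare the roles $r_i^j$ and $r_k^l$ by means of exactly the same family of equations~\eqref{roeqro} (or~\eqref{roeqroik} when $i=j$), and differ only in how the admissible permutation $\sigma$ is quantified: \emph{for all} such $\sigma$, \emph{for some} fixed such $\sigma$, or \emph{for each profile} some such $\sigma$. Hence the chain of inclusions should follow from the trivial logical implications ``$\forall \Rightarrow \exists$'' and ``one witness good for all profiles $\Rightarrow$ one witness for each profile.'' The only point that needs an argument is that the universal statement defining $\bBr$ is not vacuous, i.e. that the set $\Sigma=\set{\sigma\in S_I:\sigma(k)=i,\ \sigma(l)=j}$ (respectively $\set{\sigma\in S_I:\sigma(k)=i}$ when $i=j$) is nonempty. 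This I would settle first: by the consistency convention built into the definition of ``$r_i^j$ is the same as $r_k^l$ under $\sigma$'', namely that $i=j$ iff $k=l$, the assignment $k\mapsto i$, $l\mapsto j$ is a well-defined partial injection of $I$ into itself, and every partial injection of the finite set $I$ extends to a permutation; so $\Sigma\neq\emptyset$.

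For $\bBr\subseteq\bTr$: assuming $r_i^j\bBr r_k^l$, pick any $\sigma\in\Sigma$, which exists by the observation above. By Definition~\ref{blindr}, \eqref{roeqro} holds for this particular $\sigma$ and every profile $\ba$, and that is precisely the condition in Definition~\ref{twistedr}, so $r_i^j\bTr r_k^l$. For $\bTr\subseteq\bMr$: assuming $r_i^j\bTr r_k^l$ with witnessing permutation $\sigma$, set $\sigma_\ba=\sigma$ for every $\ba\in A^I$; then $\sigma_\ba(k)=i$, $\sigma_\ba(l)=j$, and $[r_i^j(\ba_{-ij})](a_i,a_j)=\pi_i(\ba)=\pi_k(\ba\sigma)=[r_k^l(\ba_{-ij}\sigma_\ba)](a_i,a_j)$, which is the defining condition of $\bMr$ in Definition~\ref{simulatesr}. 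The cases with $i=j$ are identical after replacing the pair $(a_i,a_j)$ by $a_i$ throughout.

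The ``hard part'' here is only psychological: the blind condition, being a statement of the form ``for all $\sigma$ in some set'', would be vacuously satisfied if that set were empty, and then $\bBr$ could fail to be contained in $\bTr$. So the single genuine ingredient is the nonemptiness of $\Sigma$, which in turn rests on the compatibility constraint $i=j\iff k=l$ that is part of what it means for two roles to be comparable under a permutation; everything else is just unwinding the quantifiers.
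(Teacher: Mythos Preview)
Your proof is correct and follows essentially the same approach as the paper's: pick any admissible $\sigma$ to pass from $\bBr$ to $\bTr$, and set $\sigma_\ba=\sigma$ constantly to pass from $\bTr$ to $\bMr$. Your additional care in arguing that the set $\Sigma$ of admissible permutations is nonempty (via the compatibility $i=j\iff k=l$) is a point the paper leaves implicit, but otherwise the arguments coincide.
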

	
	\begin{proof}
		Suppose that $r_i^j{\bf B_r}r_k^l$. Then any single  permutation $\sigma$ such that $\sigma(l)=j$ and $\sigma(k)=i$ is enough to show that $r_i^j{\bf T_r}r_k^l$.
		
		If we now assume $r_i^j{\bf T_r}r_k^l$, then letting $\sigma_\ba$ be the permutation $\sigma$ in the definition of  ${\bf T_r}$ for all $\ba\in A$, we prove that  $r_i^j\bMr r_k^l$.
	\end{proof}

	\section{Relations among players}
	
	The previous section presented the notion of roles and some relations among them, but now we want to shift the focus to relations among players. We will define relations between players $i$ and $j$ based on the relations their roles $r_i^j$ and $r_j^i$ have to each other. So, building on the concepts of blindly related, twistedly related and simulation between roles, we will define the notions of blindly related, twistedly related and simulating \textit{players}, along with a new one of rigidly related players.
	This allows us to describe some of  the possible situations of interdependence of players $i$ and  $j$ in the game.

	\begin{definition}\label{blind2}
		Two players $i,j$ are {\em blindly related} in a game $G$, (denoted by $i\mathbf{B}j$) if $r_i^j{\bf B_r}r_j^i$. 
	\end{definition}
	
	In other words,  the roles $r_i^j$ and $r_j^i$ of players $i$ and $j$ are  the same  under {\it any} permutation that exchanges their places. For every permutation $\sigma$ such that $\sigma(i)=j$ and $\sigma(j)=i$ and for every profile $\ba$,   $\pi_i(\ba)=\pi_j(\ba\sigma)$.
	
	\begin{lemma}\label{blind-transitivity2}
		In a game with at least four different players, $i, j, k$, and $l$, if $i{\bf B}j$ and $j{\bf B}k$ hold, then so does $i{\bf B}k$.
	\end{lemma}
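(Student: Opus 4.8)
The plan is to peel the relations $\bB$ and $\bBr$ back to their defining equations and route from $\pi_i$ to $\pi_k$ through $\pi_j$. By Definitions~\ref{blindr} and~\ref{blind2}, $i\bB j$ unpacks to the statement that $\pi_i(\ba)=\pi_j(\ba\sigma)$ for \emph{every} $\ba\in A^I$ and \emph{every} permutation $\sigma$ with $\sigma(i)=j$ and $\sigma(j)=i$, and the goal $i\bB k$ is the same statement with the pair $(i,j)$ replaced by $(i,k)$. One cannot simply invoke transitivity of $\bBr$ (Lemma~\ref{blindST}): the roles $r_i^j,r_j^i$ and $r_j^k,r_k^j$ share no common middle term. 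Instead, after routing through $\pi_j$ I will have to repair the unwanted action of the routing permutations on the remaining coordinates, and the tool for that is the following claim, which I call (P): \emph{$\pi_j$ is invariant under every permutation of $I$ that fixes $j$}.

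To prove (P), note that $i\bB j$ means $r_i^j\bBr r_j^i$, so by the remark following Lemma~\ref{blindST} also $r_j^i\bBr r_j^i$; writing \eqref{roeqro} out for this self-relation gives $\pi_j(\ba)=\pi_j(\ba\sigma)$ for all $\ba$ and all $\sigma$ fixing both $i$ and $j$. Symmetrically, $j\bB k$ gives $\pi_j(\ba)=\pi_j(\ba\tau)$ for all $\tau$ fixing both $j$ and $k$. Thus $\pi_j$ is invariant under the subgroup of $S_I$ generated by the pointwise stabilizers of $\set{i,j}$ and of $\set{j,k}$; inside $S_{I\setminus\set{j}}$ (the symmetric group on $I\setminus\set{j}$, i.e.\ the permutations that fix $j$) these are exactly the point-stabilizers of $i$ and of $k$. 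Since there is a fourth player $l$, the set $I\setminus\set{j}$ has at least three elements, and in a symmetric group on $\ge 3$ points the stabilizers of two distinct points generate the whole group: together they contain every transposition $(x\ y)$ with $x,y\in I\setminus\set{j}$ except possibly $(i\ k)$, and $(i\ k)=(i\ l)(k\ l)(i\ l)$ is a product of three such. Hence $\pi_j$ is invariant under all of $S_{I\setminus\set{j}}$, which is (P).

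Granting (P), take the transposition $(i\ j)$ in the defining condition of $i\bB j$ to get $\pi_i(\ba)=\pi_j(\ba\,(i\ j))$ for all $\ba$; and since $\bBr$ is symmetric (Lemma~\ref{blindST}) the player relation $\bB$ is symmetric too, so $j\bB k$ yields $k\bB j$, and taking $(j\ k)$ there gives $\pi_k(\ba)=\pi_j(\ba\,(j\ k))$ for all $\ba$. Now fix $\ba\in A^I$ and any $\rho$ with $\rho(i)=k$ and $\rho(k)=i$, and set $\mu=(i\ j)\mnu\,\rho\,(j\ k)$. Tracing $j\mapsto k\mapsto i\mapsto j$ shows $\mu(j)=j$, so by (P) and the associativity of the right action,
\[\pi_j\bigl(\ba\,(i\ j)\bigr)=\pi_j\bigl((\ba\,(i\ j))\,\mu\bigr)=\pi_j\bigl(\ba\,((i\ j)\,\mu)\bigr)=\pi_j\bigl((\ba\rho)\,(j\ k)\bigr),\]
the last equality because $(i\ j)\,\mu=\rho\,(j\ k)$. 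Chaining, $\pi_i(\ba)=\pi_j(\ba\,(i\ j))=\pi_j((\ba\rho)\,(j\ k))=\pi_k(\ba\rho)$, and since $\ba$ and $\rho$ were arbitrary this is exactly $r_i^k\bBr r_k^i$, i.e.\ $i\bB k$.

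The crux is (P): upgrading the two ``blind with respect to a pair'' self-conditions to invariance of $\pi_j$ under the full stabilizer of $j$. This is precisely where the fourth player is needed — with only three players the generated subgroup is trivial and the implication fails, one being left with $\pi_i(\ba)=\pi_k(\ba\,(i\ j\ k))$ rather than $\pi_i(\ba)=\pi_k(\ba\,(i\ k))$. The only other thing to watch is the composition order of the right action, so that $\mu=(i\ j)\mnu\,\rho\,(j\ k)$ genuinely fixes $j$.
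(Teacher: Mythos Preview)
Your proof is correct. The key claim (P) is derived soundly: from $i\bB j$ you get $r_j^i\bBr r_j^i$ via the remark after Lemma~\ref{blindST}, hence $\pi_j$ is invariant under permutations fixing both $i$ and $j$; symmetrically from $j\bB k$; and the generation argument (point-stabilizers of two distinct points in a symmetric group on $\ge 3$ letters generate the whole group, since $(i\ k)=(i\ l)(k\ l)(i\ l)$) is right and is exactly where the fourth player $l$ enters. The bridging step with $\mu=(i\ j)\rho(j\ k)$ is clean, and your tracking of the right-action composition order matches the paper's convention.

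Your route, however, is genuinely different from the paper's. The paper does \emph{not} isolate (P); instead it takes the target permutation $\mu$ (with $\mu(i)=k$, $\mu(k)=i$) and factors it explicitly as a product of permutations that swap $i\leftrightarrow j$ or $j\leftrightarrow k$, splitting into two cases according to whether $\mu(j)=j$. When $\mu(j)=l\neq j$ a two-step factorization suffices; when $\mu(j)=j$ the paper writes down a six-step chain $\sigma_3\tau_1\tau_2\sigma_2\sigma_1\tau_1=\mu$ and walks $\pi_i\to\pi_j\to\pi_k\to\cdots\to\pi_k$ along it, also invoking the symmetry of $\bBr$. Your approach trades that case analysis and the ad hoc six-permutation product for a single group-theoretic lemma about stabilizers; it explains more transparently \emph{why} a fourth player is needed (otherwise the two stabilizers are both trivial and generate nothing). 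The paper's approach, in return, is entirely self-contained and constructive, never appealing to a generation statement.
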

	
	\begin{proof} The hypotheses $i{\bf B}j$ and $j{\bf B}k$ imply that for every profile $\ba$ and every permutation $\sigma$ such that $\sigma(i)=j$ and $\sigma(j)=i$, $\pi_i(\ba)=\pi_j(\ba\sigma)$, and for every profile $\bb$ and every permutation $\tau$ such that $\tau(j)=k$ and $\tau(k)=j$, $\pi_j(\bb)=\pi_k(\bb\tau)$.

		Given a profile $\ba$ and a permutation $\mu$ such that  $\mu(i)=k$ and $\mu(k)=i$, let us consider two cases. In the first one, we assume  $\mu(j)=l\neq j$. Now we can factor $ \mu=\sigma\tau$, where  $\sigma=(l\ k)(i\ j)$ and $\tau=(i\ j)(l\ k)\mu$ satisfy that $\sigma(i)=j$, $\sigma(j)=i$, $\tau(j)=k$, and $\tau(k)=j$. Then 
		
		\[\pi_i(\ba)=\pi_j(\ba\sigma)=\pi_k(\ba\sigma\tau)=\pi_k(\ba\mu).\]
		
		so $i{\bf B}k$ holds.
		
		In the case in which  $\mu(j)=j$, we  cannot get such a straightforward factorization. We write $\mu=\gamma (i \ k)$ where $\gamma$ is a permutation that leaves $i, j$ and $k$ invariant and consider the permutations: $\tau_1=(j\ k)(i\ l)$, $\sigma_1=(i\ j)$, $\sigma_2=(i\ j)(k\ l)$, $\tau_2=(j\ k)$, and $\sigma_3=\gamma(i\ j)$. We can check that $\sigma_s(i)=j$, $\sigma_s(j)=i$, $\tau_t(j)=k$, and $\tau_t(k)=j$ for $s=1,2,3$ and $t=1,2$. 
		
		We  compute the composition:  \[\sigma_3\tau_1\tau_2\sigma_2\sigma_1\tau_1= \gamma (i\ j)(j\ k)(i\ l)(j\ k)(i\ j)(k\ l)(i\ j)(j\ k)(i\ l)=\] \[\gamma (i\ j)(i\ l)(k\ l)(j\ k)(i\ l)=\gamma (i\ k)=\mu.\]
		
		Using the hypothesis, and the fact that by Lemma \ref{blindST} the relation $\bf B_r$ is symmetric, we get  for any $\ba\in A^I$: $\pi_i(\ba)=\pi_j(\ba\sigma_3)=\pi_k(\ba\sigma_3\tau_1)=\pi_j(\ba\sigma_3\tau_1\tau_2)=\pi_i(\ba\sigma_3\tau_1\tau_2\sigma_2)=\pi_j(\ba\sigma_3\tau_1\tau_2\sigma_2\sigma_1)=\pi_k(\ba\sigma_3\tau_1\tau_2\sigma_2\sigma_1\tau_1)=\pi_k(\ba\mu)$.
		
	\end{proof}
	
	\begin{example}\label{notrans}
		The transitivity of the relation $\bf B$ does not hold  if $n=3$. To see this, consider a game $G$ with three players and action set $\set{a,b,c}$. All the payoffs other than the ones indicated below are zero.
		\begin{center} $G$ \ \ \ \
			\begin{tabular}{c|c|c|c|c|c|c}
				profile &  (a,b,c) & (a,c,b)& (b,a,c)& (b,c,a)&(c,a,b)&(c,b,a)\\
				\hline
				payoffs  & (0,1,2) &(3,2,1) & (1,0,4)&(5,4,0)&(2,3,5)&(4,5,3)
			\end{tabular}
		\end{center}
		Here $1{\bf B}2$ and $2{\bf B}3$ hold, but  $1{\bf B}3$ does not, since $\pi_1((a,b,c))=0$ and $\pi_3((a,b,c) (1\ 3))=\pi_3((c,b,a))=3$. This example also shows that $\bf B$ is not reflexive, since $\pi_1((a,b,c))=0$ and $\pi_1((a,b,c)(2\ 3))=\pi_1((a,c,b))=3$ so $1{\bf B}1$ does not hold.
	\end{example}
	
	From the definition of $\bB$ and the symmetry of $\bBr$ (Lemma \ref{blindST}), it follows that $\bB$ is symmetric.
	
	Using the relation $\bB$ of blindly related players, we can present new characterizations of the notions of symmetric and anonymous games.

	\begin{lemma}
		A game is {\em anonymous} if and only if for every player $i\in I$, $i\bB i$ holds.
	\end{lemma}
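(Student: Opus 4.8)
The plan is to show that the single assertion ``$i\mathbf{B}i$'' is, after unwinding the definitions, nothing more than a reindexed version of the clause defining anonymity, so that the biconditional reduces to a routine translation plus one application of Lemma~\ref{sigmaexists}. The first step is to record this unfolding. By Definition~\ref{blind2} together with the $i=j$ case of Definition~\ref{blindr} (equation~\eqref{roeqroik} with $k=i$), the statement $i\mathbf{B}i$ says that $r_i^i(\ba_{-i})=r_i^i(\ba_{-i}\sigma)$ for every profile $\ba\in A^I$ and every $\sigma\in S_I$ with $\sigma(i)=i$. Evaluating both sides at $a_i$, using $[r_i^i(\bc_{-i})](c_i)=\pi_i(\bc)$, and noting that when $\sigma$ fixes $i$ the profile with $i$-th coordinate $a_i$ and remaining coordinates $\ba_{-i}\sigma$ is exactly $\ba\sigma$ (since $(\ba\sigma)_i=a_{\sigma(i)}=a_i$), I get the equivalence
\[
i\mathbf{B}i\iff\pi_i(\ba)=\pi_i(\ba\sigma)\text{ for all }\ba\in A^I\text{ and all }\sigma\in S_I\text{ with }\sigma(i)=i,
\]
i.e. $i\mathbf{B}i$ holds exactly when $\pi_i$ is invariant under the stabiliser of $i$ in $S_I$.

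For the forward direction I would assume $G$ anonymous, fix $i\in I$, a profile $\ba$, and $\sigma\in S_I$ with $\sigma(i)=i$, and set $\bb=\ba\sigma$. Then $b_i=a_{\sigma(i)}=a_i$, and $\bb_{-i}$ is obtained from $\ba_{-i}$ by a permutation of the coordinate set $I\setminus\set{i}$, so Lemma~\ref{sigmaexists} (read with index set $I\setminus\set{i}$) gives $\#\bb_{-i}=\#\ba_{-i}$. Anonymity then yields $\pi_i(\ba)=\pi_i(\bb)=\pi_i(\ba\sigma)$, and by the unfolding above $i\mathbf{B}i$ holds for every $i$.

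For the converse I would assume $i\mathbf{B}i$ for all $i$ and take $\ba,\bb\in A^I$ with $a_i=b_i$ and $\#\ba_{-i}=\#\bb_{-i}$. By Lemma~\ref{sigmaexists} there is a permutation of $I\setminus\set{i}$ carrying $\bb_{-i}$ onto $\ba_{-i}$; extending it to $\sigma\in S_I$ by declaring $\sigma(i)=i$, a coordinate-by-coordinate check gives $\ba=\bb\sigma$ (the $i$-th coordinate matches because $b_i=a_i$, the rest because $\sigma$ acts on $I\setminus\set{i}$ precisely as the chosen permutation). The unfolded form of $i\mathbf{B}i$ then gives $\pi_i(\bb)=\pi_i(\bb\sigma)=\pi_i(\ba)$, which is exactly anonymity.

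The only genuinely delicate point is bookkeeping: Lemma~\ref{sigmaexists} is stated over $A^I$, so I must be explicit that it applies verbatim to the restricted profiles $\ba_{-i},\bb_{-i}$ over the index set $I\setminus\set{i}$, and I must make sure that extending a permutation of $I\setminus\set{i}$ by fixing $i$ really does turn ``$\bb_{-i}$ permuted onto $\ba_{-i}$'' into ``$\bb\sigma=\ba$'' and, conversely, that restricting a stabiliser element of $i$ gives a permutation of $I\setminus\set{i}$. No obstacle beyond this careful reindexing is expected.
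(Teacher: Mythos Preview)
Your proof is correct and follows essentially the same approach as the paper's: both directions proceed by setting $\bb=\ba\sigma$ (or vice versa) and invoking anonymity on one side and the stabiliser-invariance form of $i\mathbf{B}i$ on the other. You are simply more explicit than the paper about the unfolding of Definition~\ref{blind2} via equation~\eqref{roeqroik} and about the fact that Lemma~\ref{sigmaexists} is being applied over the restricted index set $I\setminus\{i\}$.
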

	\begin{proof}
		Assume that a game is anonymous, so for every $i\in I$ and profiles $\ba$ and $\bb$, if $a_i= b_i$ and $\#\ba_{-i}=\#\bb_{-i}$ then $\pi_i(\ba)=\pi_i(\bb)$. To prove that $i\bB i$ we need to prove that if $\sigma$ is such that $\sigma(i)=i$ then $\pi_i(\ba)=\pi_i(\ba\sigma)$. It is enough to consider any such $\sigma$ and define $\bb=\ba\sigma$. Then $a_i= b_i$ and $\#\ba_{-i}=\#\bb_{-i}$ so $\pi_i(\ba)=\pi_i(\bb)=\pi_i(\ba\sigma)$ holds.
		
		If we assume that $i\bB i$ and $\ba, \bb$ are profiles such that $a_i= b_i$ and $\#\ba_{-i}=\#\bb_{-i}$ then there exists a permutation $\sigma$ such that $\sigma(i)=i$ and $\bb=\ba\sigma$, so $\pi_i(\ba)=\pi_i(\ba\sigma)=\pi_i(\bb)$. 
	\end{proof}

	\begin{lemma}
		A game is {\em symmetric} if and only if for every player $i, j\in I$, $i\bB j$ holds.
	\end{lemma}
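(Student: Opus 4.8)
The plan is to prove both implications against the characterization of symmetry in Definition~\ref{defBetal}, after first unwinding what $\bB$ means. By Definitions~\ref{blindr} and~\ref{blind2}, the statement $i\bB j$ says exactly that for \emph{every} profile $\ba\in A^I$ and \emph{every} permutation $\sigma$ with $\sigma(i)=j$ and $\sigma(j)=i$ one has $\pi_i(\ba)=\pi_j(\ba\sigma)$; when $i=j$ it reduces to $\pi_i(\ba)=\pi_i(\ba\sigma)$ for every $\sigma$ fixing $i$. The other ingredients I would use are Lemma~\ref{sigmaexists} and the preceding lemma, that a game is anonymous iff $i\bB i$ holds for all $i$.

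For the direction ``symmetric $\Rightarrow$ $i\bB j$ for all $i,j$'', I would fix $i,j$, a profile $\ba$, and a $\sigma$ with $\sigma(i)=j$, $\sigma(j)=i$, and set $\bb=\ba\sigma$. Then $b_j=a_{\sigma(j)}=a_i$, and since $\sigma$ restricts to a bijection of $I\setminus\{j\}$ onto $I\setminus\{i\}$, the entries of $\bb_{-j}$ are, as a multiset, exactly those of $\ba_{-i}$, so $\#\bb_{-j}=\#\ba_{-i}$; the symmetry condition of Definition~\ref{defBetal} then gives $\pi_i(\ba)=\pi_j(\bb)=\pi_j(\ba\sigma)$. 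The computation is the same whether or not $i=j$, and for $i=j$ it coincides with the proof of the preceding anonymity lemma.

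For the converse I would assume $i\bB j$ for all $i,j$; in particular $i\bB i$ for every $i$, so by the preceding lemma $G$ is anonymous. Given profiles $\ba,\bb$ with $a_i=b_j$ and $\#\ba_{-i}=\#\bb_{-j}$, I must derive $\pi_i(\ba)=\pi_j(\bb)$. If $i=j$ this is exactly anonymity. If $i\neq j$, I would set $\bb'=\ba(i\ j)$ (swap the $i$-th and $j$-th entries of $\ba$); then $b'_j=a_i=b_j$, and since swapping positions $i$ and $j$ does not alter the multiset of all entries other than the $i$-th, $\#\bb'_{-j}=\#\ba_{-i}=\#\bb_{-j}$. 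Now $i\bB j$ applied with the permutation $(i\ j)$ gives $\pi_i(\ba)=\pi_j(\ba(i\ j))=\pi_j(\bb')$, and anonymity applied to $\bb'$ and $\bb$ (equal in coordinate $j$, with equal $\#(\cdot)_{-j}$) gives $\pi_j(\bb')=\pi_j(\bb)$; combining, $\pi_i(\ba)=\pi_j(\bb)$.

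I expect the converse to be the only subtle point. The naive attempt is to realize $\bb$ as $\ba\sigma$ for a single $\sigma$ exchanging $i$ and $j$ and invoke $i\bB j$ once, but this is impossible in general: $\bb=\ba\sigma$ together with $\sigma(i)=j$ forces $b_i=a_j$, which is not given. The remedy is to break the passage from $\ba$ to $\bb$ into the swap of $i$ and $j$ (handled by $i\bB j$) followed by a rearrangement of the coordinates other than $j$ (handled by anonymity, that is, $j\bB j$). A cleaner structural alternative for the converse: $i\bB j$, $j\bB i$ and $m\bB m$ for $m\notin\{i,j\}$ together say precisely that $G$ is invariant under the transposition $(i\ j)$ in the sense of Definition~\ref{permutationInvariant}; since the transpositions generate $S_I$ and the permutations leaving $G$ invariant form a group, $G$ is symmetric in the sense of Definition~\ref{defSymPer}, hence of Definition~\ref{defBetal}.
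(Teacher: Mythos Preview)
Your argument is correct. The forward implication matches the paper's: given $\sigma$ with $\sigma(i)=j$, $\sigma(j)=i$ and a profile $\ba$, one sets $\bb=\ba\sigma$, checks $b_j=a_i$ and $\#\bb_{-j}=\#\ba_{-i}$, and invokes Definition~\ref{defBetal}.

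For the converse the two proofs diverge. The paper claims that whenever $a_i=b_j$ and $\#\ba_{-i}=\#\bb_{-j}$ there exists a single permutation $\sigma$ with $\sigma(i)=j$, $\sigma(j)=i$ and $\bb=\ba\sigma$, and then applies $i\bB j$ in one stroke. You correctly anticipated the obstruction to this step: $\bb=\ba\sigma$ together with $\sigma(i)=j$ forces $b_i=a_{\sigma(i)}=a_j$, which is not part of the hypothesis (take $I=\{1,2,3\}$, $i=1$, $j=2$, $\ba=(x,y,z)$, $\bb=(z,x,y)$ with $x,y,z$ pairwise distinct). Your remedy---first apply $i\bB j$ with the transposition $(i\ j)$ to reach $\bb'=\ba(i\ j)$, then use $j\bB j$ (anonymity) to pass from $\bb'$ to $\bb$---is exactly what is needed to make the converse rigorous. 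The structural alternative you sketch (the hypotheses yield invariance of $G$ under every transposition $(i\ j)$, the invariance set is a subgroup of $S_I$, and transpositions generate $S_I$) is also a valid and conceptually clean route.
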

	\begin{proof}
		The proof is similar to the previous one. In one direction,  if $a_i= b_j$ and $\#\ba_{-i}=\#\bb_{-j}$ and $\sigma$ is such that $\sigma(i)=j$ and $\sigma(j)=i$, define $\bb=\ba\sigma$. Then $a_i= b_j$ and $\#\ba_{-i}=\#\bb_{-j}$ so $\pi_i(\ba)=\pi_j(\bb)=\pi_i(\ba\sigma)$ holds.
		
		In the other direction, if $i\bB j$ and $\ba, \bb$ are profiles such that $a_i=b_j$ and $\#\ba_{-i}=\#\bb_{-j}$ then there exists a permutation $\sigma$ such that $\sigma(i)=j$, $\sigma(j)=i$, and $\bb=\ba\sigma$, so $\pi_i(\ba)=\pi_i(\ba\sigma)=\pi_i(\bb)$. 
	\end{proof}

	\begin{definition}\label{twisted2}
		Two players $i,j$ are {\em twistedly related} (denoted by $i\mathbf{T}j$)  if $r_i^j{\bf T_r}r_j^i$. 
	\end{definition}
	
	Players  $i$ and $j$ are twistedly related if there exists at least one permutation under which the roles $r_i^j$ and $r_j^i$ are the same. That is, there exists a  permutation $\sigma$ such that $\sigma(i)=j$ and $\sigma(j)=i$ and for every profile $\ba$,   $\pi_i(\ba)=\pi_j(\ba\sigma)$.
	
	It follows easily from the corresponding properties for the relation $\bTr$ from Lemma \ref{Tr} that $\bT$ is reflexive and symmetric. In general, it is not transitive as  the following example shows:
	
	\begin{example}\label{Tnotrans}
		We define a game with four players and set of actions $A=\set{a,b,c,d}$. We assume that the payoff for all players in all the profiles not indicated below are zero, so that the equations involving those profiles are trivially satisfied. 
		
		\begin{center} 
			\begin{tabular}{c|c|c|c|c|c|c}
				profile &  (a,b,c,d) & (a,c,b,d)& (c,a,b,d)&(c,b,a,d) &(b,c,a,d)&(b,a,c,d)\\
				\hline
				payoffs  & (1,2,3,0) &(4,3,2,0) & (3,4,5,0)&(6,5,4,0)&(5,6,1,0)&(2,1,6,0)
			\end{tabular}
		\end{center}
		
		In this game  $1{\bf T}2$ and $2\bT 3$ are satisfied, but $1{\bf T}3$ is not:  the permutations that could realize the relation are $(13)$ and $(13)(24)$, but we have that  $\pi_1((a,b,c,d)=1\neq\pi_3((a,b,c,d)(13))=\pi_3((c,b,a,d))=4$ and $\pi_1((a,b,c,d))=1\neq\pi_3((a,b,c,d)(13)(24))=\pi_3((c,d,a,b))=0$. 
		
	\end{example}
	
	\begin{definition}\label{simulation2}
		Player  $i$ {\em simulates} the situation of player $j$ ($i\mathbf{M}j$) if $r_i^j{\bf M_r}r_j^i$.
	\end{definition}
	
	When this is the case, for every profile $\ba$, there exists a permutation $\sigma_\ba$ such that $\sigma_\ba(i)=j$ and $\sigma_\ba(j)=i$ and
	\begin{equation}\label{simulation2-eq}
	\pi_i(\ba)=\pi_j(\ba\sigma_{\ba})
	\end{equation}
	
	The relation $\bM$ is reflexive, as follows easily from $\bMr$ being reflexive.
	
	\begin{lemma}\label{symmetryM}
		For games with 2 or 3 players, the relation $\bM$ is symmetric, but this condition fails for games with 4 or more players.
	\end{lemma}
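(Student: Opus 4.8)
The plan is to handle the two halves of the statement separately; in both, the governing fact is how many permutations $\sigma$ of $I$ satisfy $\sigma(i)=j$ and $\sigma(j)=i$ for a fixed pair $i\ne j$.

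\emph{Positive part ($n\le 3$).} Since $\bM$ is reflexive, symmetry is automatic on the diagonal, so only distinct players matter. When $n\le 3$ there is at most one index outside $\{i,j\}$, and any $\sigma$ with $\sigma(i)=j$, $\sigma(j)=i$ must fix it; hence the transposition $(i\ j)$ is the \emph{only} such permutation. Consequently, if $i\bM j$ holds, the witnessing permutation $\sigma_\ba$ of Definition~\ref{simulatesr} is forced to equal $(i\ j)$ for every profile, so in fact $\pi_i(\ba)=\pi_j(\ba(i\ j))$ for \emph{all} $\ba\in A^I$. From here I would repeat the manipulation that proves $\bTr$ symmetric in Lemma~\ref{Tr}: replacing $\ba$ by $\ba(i\ j)$ and using that $(i\ j)$ is an involution gives $\pi_j(\ba)=\pi_i(\ba(i\ j))$ for all $\ba$; taking $\sigma_\ba=(i\ j)$ for every $\ba$, this witnesses $j\bM i$. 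Hence $\bM$ is symmetric when $n\le 3$.

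\emph{Negative part ($n\ge 4$).} For $n=4$, Example~\ref{exSymM} is already the counterexample: it exhibits a game with $r_1^2\bMr r_2^1$ but not $r_2^1\bMr r_1^2$, i.e. $1\bM 2$ holds and $2\bM 1$ fails. For arbitrary $n\ge 4$ I would extend that game, keeping the common action set $A=\{a,b,c,d\}$: let ${\bf w}\in A^I$ be the profile in which players $1,2,3,4$ play $a,b,c,d$ respectively and every remaining player plays $a$, and set every payoff of every player to $0$ except $\pi_2({\bf w})=1$. Then $2\bM 1$ fails already at the profile ${\bf w}$, since $\pi_2({\bf w})=1$ while $\pi_1({\bf w}\sigma)=0$ for \emph{every} permutation $\sigma$, the function $\pi_1$ being identically $0$.

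The only step with real content is then verifying that $1\bM 2$ still holds, i.e. that for every $\ba$ there is a $\sigma$ with $\sigma(1)=2$, $\sigma(2)=1$ and $\pi_2(\ba\sigma)=0$, equivalently $\ba\sigma\ne{\bf w}$. The admissible permutations are exactly those of the form $(1\ 2)h$ with $h$ fixing $1$ and $2$, so the profiles $\ba\sigma$ so obtained are exactly those gotten from $\ba(1\ 2)$ by permuting the coordinates indexed by $\{3,\dots,n\}$ among themselves. If all of these equaled ${\bf w}$, then ${\bf w}$ would be constant on those coordinates; but the coordinates of ${\bf w}$ outside $\{1,2\}$ read $c,d,a,\dots,a$, which is non-constant precisely because $n\ge 4$. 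Hence some admissible $\sigma$ gives $\ba\sigma\ne{\bf w}$, so $\pi_1(\ba)=0=\pi_2(\ba\sigma)$, and $1\bM 2$ holds, so $\bM$ is not symmetric. The main obstacle is exactly this verification — one must be sure the single ``distinguished'' profile that carries the nonzero payoff can always be permuted out of the way — and it is the same one-versus-several dichotomy among swapping permutations that makes the positive argument work for $n\le 3$ and this construction possible for $n\ge 4$.
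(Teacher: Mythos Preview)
Your argument is correct and follows the paper's approach closely. For $n\le 3$ you use exactly the paper's observation that $(i\ j)$ is the unique admissible permutation, forcing $i\bM j$ to collapse to $i\bR j$ and hence to be symmetric via the involution trick. For $n=4$ you cite the same Example~\ref{exSymM} the paper does.

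Where you go beyond the paper is in the case $n>4$: the paper's proof simply invokes Example~\ref{exSymM} for ``four or more players'' without explaining how the four-player construction extends. Your padding of ${\bf w}$ by extra coordinates equal to $a$, together with the observation that the $\{3,\dots,n\}$-block of ${\bf w}$ is non-constant so some admissible $\sigma$ always avoids ${\bf w}$, fills that gap cleanly. This is not a different method, just a more careful execution of the same idea.
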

	
	\begin{proof}
		For 2 or 3 players, if $i\neq j$ and $i\bM j$, then there is a single permutation, $\sigma=(ij)$ such that $\sigma(i)=j$ and $\sigma(j)=i$, so for every profile $\ba$, $\sigma_\ba=(ij)$. So, if for every profile $\ba$, $\pi_i(\ba)=\pi_j(\ba(ij))$, then we also get that for every profile $\bb$, $\pi_j(\bb)=\pi_j(\bb(ij)(ij))=\pi_i(\bb(ij))$ so $j\bM i$.
		
		To see that the relation ${\bf M}$ is not symmetric if there are four or more players,  we consider the game from example \ref{exSymM}. There we have that $1\bM 2$ holds but $2\bM 1$ does not.
	\end{proof}
	
	The relation $\bM$ is not transitive either, as can be seen by noticing that Example \ref{Tnotrans} applies to $\bM$ as well.
	
	To these conditions obtained from the corresponding ones for roles, we add:

	\begin{definition}\label{rigid2}
		Two players $i,j$ are  {\em rigidly related } (denoted with $i\mathbf{R}j$) if  for every profile $\ba$,
		\begin{equation}\label{eq-rigid2}
		\pi_i(\ba)=\pi_j(\ba(ij)).
		\end{equation}
	\end{definition}
	
	This can be interpreted as saying that the places of players $i$ and $j$  in the game matrix are interchangeable while the actions of the others remain fixed. This relation is clearly reflexive, using the identity permutation, and symmetric: for all $\ba \in A^I$, $\pi_j(\ba)=\pi_j(\ba(ij)(ij))=\pi_i(\ba(ij))$. Example \ref{notrans} shows also that the relation $\bR$ is not transitive, since in a game with three players, the only permutation that one need to use to verify the relation $i\bB j$ is $(ij)$, so in that example $1\bR 2$ and $2\bR 3$ hold, but $1\bR 3$ does not. Given that with the relation $\bR$, only permutations of the form $(ij)$ are used, adding more players does not change the fact that $\bR$ is not transitive.
	
	While this relation is weaker than $\bB$, when it holds for every pair of players, it is enough to guarantee the symmetry of the game.
	
	\begin{lemma}
		If in a game $G$, $i\mathbf{R}j$ is satisfied for all $i,j\in I$, then $i\mathbf{B}j$ is also satisfied for all $i, j \in I$. Therefore, the game is symmetric.
	\end{lemma}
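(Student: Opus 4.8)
The plan is to reduce the statement to ``$i\mathbf{B}j$ for every pair $i,j\in I$'', after which the game is symmetric by the already‑proved characterization of symmetric games as those for which $i\mathbf{B}j$ holds for all $i,j$. So the real task is to upgrade the hypothesis ``$\pi_i(\ba)=\pi_j(\ba(ij))$ for all $\ba$ and all pairs'' to ``$\pi_i(\ba)=\pi_j(\ba\sigma)$ for all $\ba$ and every permutation $\sigma$ with $\sigma(i)=j$ and $\sigma(j)=i$''.

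The key preliminary step I would establish is an anonymity‑type fact: under the hypothesis, $\pi_i(\ba)=\pi_i(\ba\mu)$ for every profile $\ba$ and every permutation $\mu$ that fixes $i$. The crucial special case is a transposition $(kl)$ with $k,l\in I\setminus\{i\}$, so that $i,k,l$ are pairwise distinct. Chaining the three hypotheses $i\mathbf{R}k$, $k\mathbf{R}l$ and $l\mathbf{R}i$ gives
\[\pi_i(\ba)=\pi_k(\ba(ik))=\pi_l(\ba(ik)(kl))=\pi_i(\ba(ik)(kl)(il)),\]
and with the paper's right‑action convention $(\ba\sigma)\tau=\ba(\sigma\tau)$ a short computation shows $(ik)(kl)(il)=(kl)$, hence $\pi_i(\ba)=\pi_i(\ba(kl))$. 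Writing an arbitrary $\mu$ fixing $i$ as a product of transpositions of elements of $I\setminus\{i\}$ and applying this special case repeatedly (each time to the profile obtained so far) yields the general fact. This also covers $n\le 2$ for free, since then the only permutation fixing $i$ is the identity.

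Next, given $i\ne j$ and any $\sigma$ with $\sigma(i)=j$ and $\sigma(j)=i$, I would set $\tau=\sigma(ij)$; one checks directly that $\tau(i)=i$, $\tau(j)=j$, and $\sigma=\tau(ij)$. Then for every profile $\ba$ we have $\ba\sigma=(\ba\tau)(ij)$, so applying $i\mathbf{R}j$ to the profile $\ba\tau$ and then the anonymity fact (valid since $\tau$ fixes $i$) gives
\[\pi_j(\ba\sigma)=\pi_j\bigl((\ba\tau)(ij)\bigr)=\pi_i(\ba\tau)=\pi_i(\ba).\]
Since $\ba$ and $\sigma$ were arbitrary, this is exactly $r_i^j\mathbf{B_r}r_j^i$, i.e. $i\mathbf{B}j$; and since $i,j$ were arbitrary, $i\mathbf{B}j$ holds for all pairs, whence $G$ is symmetric.

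I expect the main obstacle to be purely clerical: keeping the order of composition straight under the right‑action convention, in particular verifying $(ik)(kl)(il)=(kl)$ and the reduction $\ba\sigma=(\ba\tau)(ij)$, since with right actions it is easy to reverse the order of the factors. Once the anonymity fact is available, the rest is a two‑line computation.
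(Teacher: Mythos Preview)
Your proof is correct and follows essentially the same strategy as the paper: write $\sigma=\tau(ij)$ with $\tau$ fixing $i$, establish the anonymity fact $\pi_i(\ba)=\pi_i(\ba\tau)$ by decomposing $\tau$ into transpositions and chaining applications of the hypothesis $\mathbf{R}$, then finish with $i\mathbf{R}j$. The only cosmetic difference is that the paper decomposes $\tau$ into disjoint cycles and writes each cycle as $(ix_1)(x_1x_2)\cdots(x_ki)$, whereas you handle one transposition $(kl)$ at a time via the identity $(ik)(kl)(il)=(kl)$; both decompositions serve the same purpose.
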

	
	\begin{proof}
		Let $i$ and $j$ be fixed elements of $I$, and $\sigma$ a permutation such that $\sigma(i)=j$ and $\sigma(j)=i$. We can write $\sigma$ as the composition of disjoint cycles, one of which is the transposition $(ij)$ (see \cite{hungerford74algebra}). Furthermore, we assume for the moment that $\sigma=\tau(ij)$ where $\tau$ is a cycle $(x_1x_2\ldots x_k)$ disjoint from $(ij)$.
		
		Since $(x_1x_2\ldots x_k)=(ix_1)(x_1x_2)\ldots(x_ki)$, and using the condition $\mathbf{R1}$ accordingly in each step we get that
		
		\[\pi_i(\ba)=\pi_{x_1}(\ba(ix_1))=\pi_{x_2}(\ba(ix_1)(x_1x_2))=\ldots\]
		\[ =\pi_{x_k}(\ba(ix_1)(x_1x_2)\ldots(x_{k-1}x_k))=\pi_{i}(\ba(x_1x_2\ldots x_k))=\pi_j(\ba\sigma)\]
		
		It is clear that if $\sigma$ has more disjoint cycles, they can be dealt with in the same fashion.
		
	\end{proof}

	For a given game $G$, the inclusion between these relations is summarized as follows:
	
	\begin{proposition}\label{inclusions}
		$\bf B\subseteq R\subseteq T\subseteq M$
	\end{proposition}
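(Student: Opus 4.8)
The plan is to prove the three inclusions $\bB\subseteq\bR$, $\bR\subseteq\bT$ and $\bT\subseteq\bM$ separately, in each case merely unwinding the relevant definitions. No genuinely new combinatorial input is needed beyond the observation that the transposition $(ij)$ is always an admissible choice of permutation in the conditions defining $\bBr$ and $\bTr$.

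First I would handle $\bB\subseteq\bR$. Suppose $i\bB j$, so by Definition \ref{blind2} we have $r_i^j\bBr r_j^i$, which by Definition \ref{blindr} means that \eqref{roeqro} holds for \emph{every} permutation $\sigma$ with $\sigma(i)=j$ and $\sigma(j)=i$ (and with $\sigma(i)=i$ in the degenerate case $i=j$). Specializing $\sigma$ to the transposition $(ij)$ — which meets these constraints, and is the identity when $i=j$ — and recalling that \eqref{roeqro} is equivalent to the payoff identity \eqref{pieqpi3}, we obtain $\pi_i(\ba)=\pi_j(\ba(ij))$ for all $\ba\in A^I$, which is exactly equation \eqref{eq-rigid2} of Definition \ref{rigid2}. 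Hence $i\bR j$.

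Next, $\bR\subseteq\bT$. Suppose $i\bR j$, so \eqref{eq-rigid2} holds for all profiles. Reading this backwards through the equivalence between \eqref{pieqpi3} and \eqref{roeqro} (the abstraction over the coordinates $a_i,a_j$ carried out in the text just before Definition \ref{blindr}), the single permutation $(ij)$ witnesses that the role $r_i^j$ is the same as $r_j^i$ under it; since $(ij)$ satisfies $\sigma(i)=j$ and $\sigma(j)=i$, the existential clause of Definition \ref{twistedr} is met, so $r_i^j\bTr r_j^i$, i.e.\ $i\bT j$. Finally, $\bT\subseteq\bM$ is immediate from Lemma \ref{roles-inclusion}: if $i\bT j$ then $r_i^j\bTr r_j^i$, and the inclusion $\bTr\subseteq\bMr$ gives $r_i^j\bMr r_j^i$, that is, $i\bM j$.

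The only point requiring any care — and essentially the only place the statement could conceivably fail — is the legitimacy of using $(ij)$ as the witnessing permutation: one must be sure that it induces the correct bijection on the index set underlying $A_{-ij}$ and that the passage between the payoff identity and the role identity is sound, including the degenerate case $i=j$. But this is precisely the equivalence $\eqref{pieqpi3}\Leftrightarrow\eqref{roeqro}$ already established in the text, so in the end the proposition is a matter of bookkeeping and I expect no real obstacle.
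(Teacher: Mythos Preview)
Your proof is correct and follows essentially the same route as the paper's: specialize to the transposition $(ij)$ to pass from $\bB$ to $\bR$ and from $\bR$ to $\bT$, and then use the constant choice $\sigma_\ba=\sigma$ (which is exactly the content of Lemma~\ref{roles-inclusion} that you cite) to go from $\bT$ to $\bM$. The paper's version is marginally more direct in that it writes out the last step explicitly rather than citing the lemma, but the argument is the same.
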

	\begin{proof}
		Suppose that given $i,j \in I$, $i\mathbf{B}j$ is the case. Then every permutation $\sigma$ such that $\sigma(i)=j$ and $\sigma(j)=i$ satisfies $\pi_i(\ba)=\pi_j(\ba\sigma)$. In particular so does $\sigma=(ij)$. This expression is just equation~(\ref{eq-rigid2}), so $i\mathbf{R}j$ is satisfied.
		We have also proved that there exists one  permutation $\sigma$, namely $(ij)$, for which (\ref{eq-rigid2}) is satisfied, meaning that $i\mathbf{T}j$ is also the case.
		Finally, if $i\mathbf{T}j$ is the case, we can take for every profile $\ba$,  $\sigma_\ba = \sigma$  and we have:
		
		\[\pi_i(\ba)=\pi_j(\ba\sigma)=\pi_j(\ba\sigma_{\ba})\]
		
		\noindent meaning that $i\mathbf{M}j$ is the case.
	\end{proof}

	To see that these relations are different, consider the following examples:
	
	\begin{example}
		We define below some games with four players in which each one can choose either action $a$ or action $b$. We will assume that the payoff for all players in all the profiles not indicated below are zero (except for the case of $G'''$), so that the equations involving those profiles are trivially satisfied. A star will indicate that a payoff can take any value, and those values need not be all the same at all the occurrences of $*$. The players 1 and 2 are in different kinds of relations:
		
		\begin{center} $G$ \ \ \ \
			\begin{tabular}{c|c|c|c|c}
				profile &  (a,a,a,b) & (a,a,b,a)& (a,b,a,b)&(b,a,a,b) \\
				\hline
				payoffs  & (1,2,*,*) &(2,1,*,*) & (3,4,*,*)&(4,3,*,*)
			\end{tabular}
		\end{center}
		
		In the game $G$ above $1{\bf M}2$ is satisfied, but $1{\bf T}2$ is not, since the permutations that realize the relation are different for the profiles $(a,b,a,b)$ and $(a,a,a,b)$. More precisely, $\sigma_{(a,b,a,b)}=(12)$ and $\sigma_{(a,a,a,b)}=(12)(34)$.
		
		\begin{center}
			\begin{tabular}{c|c|c|c|c}
				&  (a,b,a,b) & (a,b,b,a)& (b,a,a,b)&(b,a,b,a) \\
				\hline
				$G'$ & (1,2,*,*) &(3,4,*,*) & (4,3,*,*)&(2,1,*,*)\\
				$G''$   & (1,2,*,*) &(3,4,*,*) & (2,1,*,*)&(4,3,*,*)\\
				
				$G'''$ & (1,2,*,*) &(1,2,*,*) & (2,1,*,*)&(2,1,*,*)\\
			\end{tabular}
		\end{center}
		
		In the game $G'$, $1{\bf T}2$ is satisfied, but not $1{\bf R}2$, since the permutation used is $(12)(34)$. In $G''$, $1{\bf R}2$ is satisfied, but $1{\bf B}2$ is not. If that were the case, using $\sigma=(12)(34)$ the equation $\pi_1(a,b,a,b)=\pi_2(b,a,b,a)$ would yield. Finally, the game $G'''$ illustrates that more identities need to be satisfied for $1{\bf B}2$ to hold, including some that involve profiles not in the table.
		
	\end{example}
	
	There are further ways in which the situations of two players may be regarded as similar or equivalent. We might want to compare not just the roles of $i$ and $j$ relative to each other, but also to find a way to match the role of each player relative to $i$ with one of the roles relative to $j$. So for $\bf X$ equal to each of $\bf B$, $\bf T$ and $\bf M$ we define two new relations. In the first one, the matching is given by a permutation:
	
	\begin{definition}
		Players $i$ and $j$ are in  the relation $\bf P^X$ ($i{\bf P^X}j$) if there exists a permutation $\tau$ such that $\tau(i)=j$ and for all $k\in I$, $r_i^k{\bf X_r}r_j^{\tau(k)}$.
	\end{definition}
	
	Relaxing the condition on the matching we obtain:
	
	\begin{definition}
		Players $i$ and $j$ are in  the relation $\bf Q^X$ ($i{\bf Q^X}j$) if $r_i^i{\bf X_r}r_j^j$ and for all $k\neq i$ there is an $l\neq j$ such that $r_i^k{\bf X_r}r_j^l$.
	\end{definition}
	
	It is clear that $\bf P^B\subseteq P^T\subseteq P^M$ and  $\bf Q^B\subseteq Q^T\subseteq Q^M$, while $\bf P^X\subseteq Q^X$ for $\bf X$ equal to $\bf B,T$ and $\bf M$. Inspecting the definitions more carefully reveals that for example $i{\bf P^B}j$ if and only if  there exists $\sigma$ such that $\sigma(j)=i$ and for all $\ba\in A$, $\pi_i(\ba)=\pi_j(\ba\sigma))$. It follows that $\bT\subseteq \bf P^B$.
	
	Similarly, $i{\bf Q^B}j$ means that for all $\ba\in A$ and $\sigma$ such that $\sigma(j)=i$, $\pi_i(\ba)=\pi_j(\ba\sigma))$. The rest of the definitions seem rather cumbersome and don't seem to have intuitive appeal.

	\section{Discussion}
	
	In this paper we presented different definitions of symmetric games from the literature, framing them under a combinatorial view to clarify their relations to each other.  This approach, given in terms of the group action of the group of permutations of the names of the players over the set of all strategic profiles, allows us the definition of symmetric game as invariance under these permutations. Such a definition can be easily found in a reinterpretation of the definition in \cite{nash51noncooperative}. 
	
	The definition of the role a player plays from the point of view of another lets us study further structure in a game using permutations. Roles can be compared to one another, and we have defined some different ways of doing so. To compare two players directly, we consider the roles each one of them plays with respect to the other. The contingent relations between roles and between players are diverse and present a rich behavior, as summarized in the following table.
	
	\begin{center}
		\begin{tabular}{|c||c|c|c|}
			\hline
			Relation	&  Reflexive & Symmetric& Transitive \\
			\hline\hline
			$\bBr$&{\scriptsize NO}&{\scriptsize YES} &{\scriptsize YES, if $n\ge 4$}\\
			& &{\scriptsize Lemma \ref{blindST}}&{\scriptsize Lemma \ref{blindST} }\\ \hline
			$\bTr$&{\scriptsize YES}&{\scriptsize YES}&{\scriptsize YES}\\
			&{\scriptsize Lemma \ref{Tr} }&{\scriptsize Lemma \ref{Tr}}&{\scriptsize Lemma \ref{Tr}}\\ \hline
			$\bMr$&{\scriptsize YES}&{\scriptsize NO}&{\scriptsize YES}\\
			& &{\scriptsize Example \ref{exSymM}}&{\scriptsize Lemma \ref{roles-transitivity}}\\ \hline
			$\bB$&{\scriptsize NO}&{\scriptsize YES}&{\scriptsize YES, if $n\ge 4$}\\
			&{\scriptsize Example \ref{notrans}} &  &{\scriptsize Lemma \ref{blind-transitivity2} and Example \ref{notrans}}\\ \hline
			$\bR$&{\scriptsize YES}&{\scriptsize YES}&{\scriptsize NO}\\
			& & &{\scriptsize Example \ref{notrans}}\\ \hline
			$\bT$&{\scriptsize YES}&{\scriptsize YES}&{\scriptsize NO}\\
			&  & &{\scriptsize Example \ref{Tnotrans}}\\ \hline
			$\bM$&{\scriptsize YES}&{\scriptsize NO, if $n\ge 4$}&{\scriptsize NO}\\
			& &{\scriptsize Lemma \ref{symmetryM}}&{\scriptsize Example \ref{Tnotrans}}\\ \hline
			
		\end{tabular}
	\end{center}
	
	Notice that of all the relations presented, the only one that is an equivalence relation is $\bf T_r$. One natural question is whether taking a quotient on the set of roles may lead to a simplification in the study of a game.
	
	Although we give our motivations in terms of how one player may find her situation reflected in that of other players, the equations that define the different relations are objective and can be checked against the payoff matrix of a strategic game. This may prove to be an important tool to analyze strategic games, finding players in similar situations in a natural and automatic way.
	
	The definitions we have given emphasize the assessment that a player may make of  another, which is a natural way in which humans analyze a game, but there is no difficulty in defining the role that a set of players plays with respect to another set of players. Thus, if $X, Y\subseteq I$, we may consider the role $r_X^Y:A_{-XY}\to\RR^{A_X\times A_Y}$, where $A_X=\prod_{i\in X}A_i$, and $A_{-XY}=\prod_{i\notin X\cup Y}A_i$, in an similar way to what we did for two players.

	A topic that we have not treated in this paper is that of the symmetries among {\em actions}. Nash proposes in \cite{nash51noncooperative} permutations over the set of actions in the game, and only then proceeds to study particular cases of those permutations, namely those that preserve the relation among players and actions that can be actually played by them. We avoided this complication just by assuming the set of actions is the same for all players. But this is an oversimplification. For instance, in the Battle of the Sexes the set of actions is the same for both players, but only by interchanging their names the symmetry that is apparent in the setting of the game can be formally justified. In \cite{ham18symmetry}, the case in which actions that have different names for different players can be identified is considered, so the Battle of the Sexes can be regarded as symmetrical. This label-independent approach leads to a combinatorial classification of games according to the symmetries found in them.   A further treatment of this extension of the concept of symmetry and a discussion of how it can contribute in finding equilibria of games can be found in \cite{Senci2018} or, in a quite different context \cite{Goranko17}.

	
	\section*{Acknowledgments}
	We thank Alfredo \'Alzaga  and Rodrigo Iglesias for fruitful discussion on these topics.
	

\end{document}